\newcommand{\w}{\omega}
\newcommand{\1}{\mathds 1  }
\newcommand{\comm}{c}
\newcommand{\p}{\partial}
\newcommand{\pH}{port-Hamiltonian}
\newcommand{\PH}{Port-Hamiltonian}
\newcommand{\G}{\mathcal{G} }
\newcommand{\V}{\mathcal{V} }
\newcommand{\E}{\mathcal{E} }
\newcommand{\Sin}{\,\textbf{sin}\,}
\newcommand{\alert}[1]{{\color{red}\sout{#1}}}  
\newcommand{\blue}[1]{{\color{blue}#1}}         
\definecolor{new}{rgb}{0.55,0,0.55}
\newcommand{\new}[1]{{\color{blue}#1}}         
\newcommand{\eqset}{{\color{blue}\mathcal Z}}
\renewcommand{\alert}[1]{}                      
\renewcommand{\blue}[1]{{\color{black}#1}}      
\renewcommand{\new}[1]{{\color{black}#1}}       
\renewcommand{\eqset}{\mathcal Z}               
\DeclareMathOperator{\col}{col}
\DeclareMathOperator{\diag}{diag}
\newtheorem{mythm}{Theorem} 
\newtheorem{myprop}{Proposition} 
\theoremstyle{remark}
\newtheorem{myrem}{Remark} 
\theoremstyle{definition}
\newtheorem{myass}{Assumption} 
\begin{document}
%
\title{A unifying energy-based approach to stability of power grids with market dynamics}


%
%
%


\author{Tjerk~Stegink, Claudio~De~Persis,~\IEEEmembership{Member,~IEEE,} and~Arjan~van~der~Schaft,~\IEEEmembership{Fellow,~IEEE}
	\thanks{This work is supported by the NWO (Netherlands Organisation for Scientific Research) programme  \emph{Uncertainty Reduction in Smart Energy Systems (URSES)}.}
	\thanks{T.W. Stegink and C. De Persis are with the Engineering and Technology institute Groningen (ENTEG), 
		University of Groningen, 9747 AG Groningen, the Netherlands.
		{\tt\small \{t.w.stegink, c.de.persis\}@rug.nl}}
	\thanks{A.J. van der Schaft is with the Johann Bernoulli Institute for Mathematics and Computer Science, University of Groningen, Nijenborgh 9, 9747 AG Groningen,
		the Netherlands.
		{\tt\small a.j.van.der.schaft@rug.nl}}%
}

%
%



\markboth{
} 
{Shell \MakeLowercase{\textit{et al.}}: Bare Demo of IEEEtran.cls for IEEE Journals}

%



\maketitle


\begin{abstract}
	In this paper a unifying energy-based approach \blue{is provided} to the modeling and stability analysis of power systems coupled with market dynamics. 
We consider a standard model of the power network with a third-order model for the synchronous generators involving voltage dynamics. By applying the primal-dual gradient method  to a  social welfare \blue{optimization}, a distributed dynamic pricing algorithm is obtained, which can be naturally formulated in \pH\ form. By interconnection with the physical model a closed-loop \pH\ system is obtained, whose properties are exploited to prove asymptotic stability to the set of optimal points. This result is extended to the case that also general nodal power constraints are included into the social welfare problem. Additionally, the case of line congestion and power transmission costs in acyclic networks is covered\alert{ as well}. Finally, 
a dynamic pricing algorithm is proposed that does not require knowledge about the power supply and demand. 

\end{abstract}

\begin{IEEEkeywords}
port-Hamiltonian, frequency regulation, optimal power dispatch,   dynamic pricing, social welfare, distributed control, convex optimization.
\end{IEEEkeywords}

%
\IEEEpeerreviewmaketitle

\section{Introduction}

\IEEEPARstart{P}{rovisioning} energy has become increasingly complicated due to several reasons, including the increased share of renewables. As a result, the generators operate more often near their capacity limits and transmission line congestion occurs more frequently. 

One effective approach to alleviate some of these challenges is to use real-time dynamic pricing as a control method \cite{dynpricehis}. This feedback mechanism can be used to encourage the consumers to change their usage when in some parts of the grid (\alert{or }\emph{control areas}) it is difficult for the generators and the network to match the demand. 

Real-time dynamic pricing also allows producers and consumers to fairly share utilities and costs associated with the generation and consumption of energy among the different control areas. The challenge of achieving this in an optimal manner while the grid operates within its capacity limits, is called the \emph{social welfare problem} \cite{kiani_anna}, \cite{kiani2012hierarchicalR1}. 

Many of the existing dynamic pricing algorithms focus on the economic part of optimal supply-demand matching \cite{kiani_anna,CDC2010_Stability}. However, if market mechanisms are used to determine the optimal power 
  dispatch (with near real-time updates of the dispatch commands)  \alert{there will exist }dynamic coupling \blue{occurs} between the market update process and the physical response of the physical power network dynamics \cite{alv_meng_power_coupl_market}. 

Consequently, under the assumption of market-based dispatch, it is essential to consider the stability of the coupled system incorporating both market operation and electromechanical power system dynamics simultaneously. 



While on this subject a vast literature is already available, the aim of this paper is to provide a rigorous and unifying passivity-based stability analysis\blue{.} \alert{of the  power network coupled with market dynamics. }We focus on a \blue{more accurate and} higher order model for \blue{the} physical power network than conventionally used in the literature. 
In particular, \blue{we use a} third-order model \alert{used }for the synchronous generators \blue{including} voltage dynamics. \blue{
  \new{As a result}, market dynamics, frequency dynamics and voltage dynamics are considered simultaneously.}

\blue{Finally}, we propose variations of the basic controller design that, among other things, allow the incorporation of capacity constraints on the generation and demand of power and on the transmission lines, and enhance the transient dynamics of the closed-loop system. The approach taken in this paper is to model both \new{the dynamic pricing controller as well as the physical network in a \pH\ way, emphasizing energy storage and power flow}. This provides a unified framework \blue{for the modeling, analysis and control of power networks with market dynamics}, with possible extensions to more refined models of the physical power network, including for example turbine dynamics.

\subsection{Literature review}
The coupling between a  high-order dynamic power network and market dynamics has been studied before in \cite{alv_meng_power_coupl_market}. Here a fourth-order model of the synchronous generator is used in conjunction with turbine and exciter dynamics, which is coupled \new{to} a simple model describing the market dynamics. The results established in \cite{alv_meng_power_coupl_market} are  based on an eigenvalue analysis of the linearized system.  

It is shown in \cite{trip2016internal} that the third-order model (often called the \emph{flux-decay model}) for describing the power network, as used in the present paper, admits a useful passivity property that allows for a rigorous stability analysis of the interconnection with optimal power dispatch controllers, even in the presence of time-varying demand.

A common way to solve a general optimization problem like the social welfare problem is by applying the 
primal-dual gradient method \cite{arrow_gradmethod,feijer-paganini}, \cite{jokic2009constrained}. 
Also in power grids this is a commonly used approach to design optimal distributed controllers, see e.g. \cite{AGC_ACC2014,mallada2014distributed,you2014reverse,zhang1achieving,zhangpapa,zhangpapaautomatica}. The problem formulations vary in these papers, with the focus being on either the generation side \cite{AGC_ACC2014,you2014reverse}, the load side \cite{mallada2014distributed}, \cite{zhao2015distributedAC}, \cite{mallada2014optimal} or both \cite{zhang1achieving,zhangpapa,zhang_papa_tree,zhangpapaautomatica}. We will elaborate on these references in the following two paragraphs. 



A vast literature focuses on linear power system models coupled with gradient-method-based controllers \cite{AGC_ACC2014,you2014reverse}, \cite{mallada2014distributed}, \cite{mallada2014optimal}, \cite{zhang1achieving}, \cite{zhao2014design}. In these references the property that the  linear power system dynamics can be formulated as a gradient method applied to a certain optimization problem is exploited. This is commonly referred to as \emph{reverse-engineering} of the power system dynamics \cite{zhang1achieving}, \cite{AGC_ACC2014}, \cite{you2014reverse}. However, this approach falls short in dealing with models involving nonlinear power flows.

Nevertheless, \cite{zhangpapa,zhang_papa_tree,zhangpapaautomatica}, \cite{zhao2015distributedAC} show the possibility to achieve optimal power dispatch in power networks with nonlinear power flows using gradient-method-based controllers. On the other hand, the controllers proposed in \cite{zhangpapa,zhang_papa_tree,zhangpapaautomatica} have restrictions in assigning the controller parameters and in addition require that the  topology of the physical network is a tree.




\subsection{Main contributions}

The contribution of this paper is to propose a novel energy-based approach to the problem that differs substantially from the aforementioned works. We proceed along the lines of \cite{LHMNLC}, \cite{stegink2015port}, where a \pH\ approach to the design of gradient-method-based controllers in power networks is proposed. In those papers it is shown that both the power network as well as the controller designs admit a port-Hamiltonian representation which are then interconnected to obtain a closed-loop \pH\ system. 

After showing that the third-order dynamical model \alert{adopted }\blue{describing} \alert{to describe }the power network admits a \pH\ representation, we provide a systematic method to design gradient-method-based controllers that \blue{is} able to balance power supply and demand while maximizing the social welfare at steady state. This design is carried out first by establishing the optimality conditions associated with the social welfare problem. Then the continuous-time gradient method is applied to obtain the \pH\ form of the \blue{dynamic pricing} controller. Then, following \cite{LHMNLC}, \cite{stegink2015port}, the market dynamics is coupled \blue{to} the physical power network in a \emph{power-preserving} manner so that all the trajectories of  the closed-loop system  converge to the desired synchronous solution and to optimal power dispatch. 

Although the proposed controllers share similarities with others presented in the literature, the way in which they are interconnected to the physical network, which is based on passivity, is to the best of our knowledge new.  Moreover, they show several advantages.

\subsubsection{Physical model}
Since our approach is based on passivity and does not require to reverse-engineer the power system dynamics as a primal-dual gradient dynamics, it allows to deal with more complex nonlinear models of the power network. More specifically, the physical model for describing the power network in this paper admits nonlinear power flows and time-varying voltages, and is more accurate and reliable than the classical second-order model \cite{powsysdynwiley}, \cite{kundur}, \cite{sauerpai1998powersystem}. 

In addition, most of the results that are established in the present paper are valid for the case of nonlinear power flows and cyclic networks, in contrast to e.g. \cite{zhang1achieving}, \cite{AGC_ACC2014}, \cite{you2014reverse}, \cite{zhao2014design}, \cite{zhao2014design}, where the power flows are linearized and e.g. \cite{zhangpapaautomatica}, \cite{zhang_papa_tree}, \cite{zhangpapa} where the physical network topology is a tree. Moreover, in the aforementioned references the voltages are assumed to be constant. While the third-order model for the power network as considered in this paper has been studied before using passivity based techniques \cite{trip2016internal}, the combination with gradient method based controllers is novel.\alert{, making this a key contribution of the present paper. } In addition, the stability analysis does not rely on linearization and is based on energy functions which allow us to establish rigorous stability results. 

\subsubsection{State transformation}
In \cite{AGC_ACC2014}, \cite{you2014reverse} it is shown how a state transformation of the closed-loop system can be used to eliminate the \blue{information about the} demand from the controller dynamics, which \blue{improves implementation of} the resulting controller.  We pursue this idea and show 
that the same kind of state transformation \alert{to eliminate the demand from the controller dynamics }can also be used for more complex physical models as considered in this paper. This \blue{avoids} the requirement of knowing the demand to determine the market price.



\subsubsection{Controller parameters}
In the present paper we show that both the physical power network as well as the dynamic pricing controllers admit a \pH\ representation, and in particular are passive systems. As a result, the interconnection between the controller and the nonlinear power system is \emph{power-preserving}, implying passivity of the closed-loop system as well. Consequently, we do not have to \blue{impose any condition on} controller design parameters for guaranteeing asymptotic stability\blue{, contrary to} \cite{zhangpapa,zhang_papa_tree,zhangpapaautomatica}.


\subsubsection{\PH\ framework}
Because of the use of the \pH\ framework, the proposed controller designs have the potential to deal with more complex models for the power network compared to the model described in this paper. \blue{As long as the more complex model remains \pH, the results continue to be valid.} This may lead to inclusion of, for example, turbine dynamics or automatic voltage regulators in the analysis, although this is beyond the scope of \blue{the present} paper. Furthermore, higher order models for the synchronous generator could be considered. 
\\

\noindent In addition, we propose various extensions to the basic controller design that have not been investigated in the aforementioned references.

\subsubsection{Transmission costs}
In addition to nodal power constraints and line congestion, we also consider the possibility of including power transmission costs into the social welfare problem. Including such costs may in particular be useful in reducing energy losses or the risk of a breakdown of certain transmission lines. 

\subsubsection{Non-strict convex objective functions}
By relaxing the conditions on the objective function, we show that also  non-strict convex/concave cost/utility functions can be considered respectively. In addition, the proposed technique allows to add damping in the gradient method based controller which may improve the convergence rate of the closed-loop system.

\subsubsection{Barrier functions}
We highlight the possibility to use barrier functions to enforce the trajectories to stay withing the feasible region, which allows operation within the capacity constraints for all time, even during transients. This permits a more realistic application of the proposed controller design.





The remainder of this paper is organized as follows. In Section \ref{sec:prel} the preliminaries are stated. Then the basic dynamic pricing algorithm is discussed 
in Section \ref{sec:gradmeth} and convergence of the closed-loop system is proven. Variations of the basic controller design are discussed in Section \ref{sec:vari-basic-contr} where in Section \ref{sec:nodcong} nodal power congestion is included into the social welfare problem, and in Section \ref{sec:linecong} the case line congestion for the acyclic power networks is discussed.  
A dynamic pricing algorithm is proposed in Section \ref{sec:impl} which does not require knowledge about the power supply and demand. In Section \ref{sec:relax-strict-conv} the possibility to relax the convexity assumption and to improve the transient dynamics of the basic controller is discussed.  Finally, the conclusions and suggestions for future research are discussed in Section \ref{sec:concl-future-rese}.

\section{Preliminaries}\label{sec:prel}

\blue{\subsection{Notation}
\label{sec:notations}
Given a symmetric matrix $A\in\mathbb R^{n\times n}$, we write $A>0 \ (A\ge0)$ to indicate that $A$ is a positive (semi-)definite matrix. The  set of positive real numbers is  denoted by $\mathbb R_{>0}$ and likewise the set of  vectors in $\mathbb R^n$ whose elements are positive by $\mathbb R^n_{>0}$. We write $A\vee B$ to indicate that either condition $A$ or condition $B$ holds, e.g., $a>0\vee b>0$ means that either $a>0$ or $b>0$ holds. Given $v\in\mathbb R^n$ then  $v\succeq0$  denotes the element-wise inequality. The notation $\1\in\mathbb R^n$ is used for the vector whose elements are equal to 1. Given a twice-differentiable function $f:\mathbb R^n\to\mathbb R^n$ then the Hessian of $f$ evaluated at $x$ is denoted by $\nabla^2f(x)$. Given a vector $\eta\in\mathbb R^m$, we denote by  $\Sin\eta\in\mathbb R^m$ the element-wise sine function. Given a  differentiable function $f(x_1,x_2),x_1\in\mathbb R^{n_1},x_2\in \mathbb R^{n_2}$ then $\nabla f(x_1,x_2)$ denotes the gradient of $f$ with respect to $x_1,x_2$ evaluated at $x_1,x_2$ and likewise $\nabla_{x_1}f(x_1,x_2)$ denotes the gradient of $f$ with respect to $x_1$.} \new{
Given a solution $x$ of $\dot x=f(x)$, where $f:\mathbb R^n\to\mathbb R^n$ is a Lebesgue measurable function and locally bounded,  the \emph{omega-limit set} $\Omega(x)$ is defined as \cite{cherukuri2016asymptoticcaratheodory}
\begin{align*}
    \Omega(x)&:=\Big\{\bar x\in\mathbb R^n \ | \ \exists \{t_k\}_{k=1}^\infty \subset [0,\infty) \\&\text{with } \lim_{k\to\infty}t_k=\infty \text{ and } \lim_{k\to\infty}x(t_k)=\bar x\Big\}.
  \end{align*}
}

\subsection{Power network model}
Consider a power grid consisting of $ n $ buses. The network is represented by a connected and undirected graph $ \G = (\V, \E) $, where the nodes, $ \V = \{1, . . . , n\} $, is the set of buses and the edges, $ \E = \{1,\ldots , m\} \subset \V \times \V $, is the set of transmission lines connecting the buses. The $k$-th edge connecting nodes $i$ and $j$ is denoted as $k=(i,j)=(j,i)$.  The ends of edge $ k $ are arbitrary labeled with a ‘+’ and a ‘-’, so that the incidence matrix $D$ of the \blue{resulting directed graph} is given by 
\begin{align*}
D_{ik}=\begin{cases}
+1 &\text{if $i$ is the positive end of edge $k$}\\
-1 &\text{if $i$ is the negative end of edge $k$}\\
0 & \text{otherwise.}
\end{cases}
\end{align*}
Each bus represents a control area and is assumed to
have a controllable power supply and demand. The dynamics at each bus is assumed to be given by  \cite{powsysdynwiley}, \cite{trip2016internal}
\begin{equation}
\begin{aligned}
\dot  \delta_i&= \w_i\\
  M_i \dot \w_i&=-\sum_{j\in\mathcal N_i}B_{ij}E_{qi}'E_{qj}'\sin \delta_{ij}-A_i\w_i+P_{gi}-P_{di}\\
T_{di}'\dot E_{qi}'&=E_{fi}-(1-(X_{di}-X_{di}')B_{ii})E_{qi}'\\&-(X_{di}-X_{di}')\sum_{j\in\mathcal N_i}B_{ij}E_{qj}'\cos \delta_{ij},
\end{aligned}\label{eq:3SGmultimachine}
\end{equation}
which is commonly referred to as the \emph{flux-decay model}. Here we use a similar notation as used in \new{established} literature on power systems \cite{powsysdynwiley}, \cite{kundur}, \cite{anderson1977}, \cite{sauerpai1998powersystem}.  
See Table \ref{tab:par3SG} for a list of symbols used in the model \eqref{eq:3SGmultimachine} and throughout the paper.
\begin{table}[H]
  \centering
  \begin{tabular}[c]{ll} 
 $\delta_i$ & Voltage angle  \\ 
 $\w_i^b$ & Frequency  \\ 
 $\w^n$ & Nominal frequency \\ 
 $\w_i$ & Frequency deviation  $\w_i:=\w_i^b-\w^n$\\ 
 $E_{qi}'$ & $q$-axis transient internal voltage \\ 
 $E_{fi}$ & Excitation voltage \\
 $P_{di}$ & Power demand  \\ 
 $P_{gi}$ & Power generation\\
 $M_i$ & Moment of inertia \\ 
 $\mathcal N_i$ & Set of buses connected to bus $i$ \\
 $A_i$ & Asynchronous damping constant \\ 
  $B_{ij}$ & Negative of the susceptance of transmission line $(i,j)$\\  
  $B_{ii}$ & Self-susceptance \\  
 $X_{di}$ & $d$-axis synchronous reactance of generator $i$\\
 $X_{di}'$ & $d$-axis transient reactance of generator $i$\\
 $T_{di}'$ & $d$-axis open-circuit transient time constant 
\end{tabular}\vspace{1mm}
\caption{Parameters and state variables of model \eqref{eq:3SGmultimachine}.}  
\label{tab:par3SG}
\end{table}
\begin{myass}
By using the power network model \eqref{eq:3SGmultimachine} the following assumptions are made, which are standard in a broad range of literature on power network dynamics \cite{powsysdynwiley}.
\begin{itemize}
\item Lines are purely inductive, i.e., the conductance is zero. This
assumption is generally valid for the case of high
voltage lines connecting  different control areas.
\item The grid is operating around the synchronous frequency which implies $\w_i^b\approx\w^n$ for each $i\in\V$.
\blue{\item In addition, we assume for simplicity that the excitation voltage $E_{fi}$ is constant for all $i\in\V$. }
\end{itemize}
\end{myass}
Define the voltage angle differences between the buses by $\eta=D^T\delta$. 
Further define the angular momenta by $ p:=M\w$, where $\w=\w^b-\blue{\1}\w^n$ are the (aggregated) frequency deviations and $M=\diag_{i\in\V}\{M_i\}$ are the moments of inertia. Let $\Gamma(E_q')=\diag_{k\in\E}\{\gamma_k\}$ and $\gamma_k=B_{ij}E_{qi}'E_{qj}'=B_{ji}E_{qi}'E_{qj}'$ where  $k$ corresponds to the edge between node $i$ and $j$. Then we can write \eqref{eq:3SGmultimachine} more compactly as \cite{trip2016internal}
\begin{equation}\label{eq:3SGcomp}
\begin{aligned}
      \dot \eta     & =D^T\w                               \\
  M   \dot \w & =-D\Gamma(E_q') \Sin \eta -A\w+P_g-P_d \\
T_{d}'\dot E_q'     & =-F(\eta)E_q'+E_f
\end{aligned}
\end{equation}
where \blue{$A=\diag_{i\in\V}\{A_i\}, P_g=\col_{i\in\V}\{P_{gi}\}, P_d=\col_{i\in\V}\{P_{di}\}, T'_d=\diag_{i\in\V}\{T_{di}'\}, E'_q=\col_{i\in\V}\{E_{qi}'\}, E_f=\col_{i\in\V}\{E_{fi}\}$}. \blue{For a given $\eta$,} the components of   \blue{the matrix $F(\eta)\in\mathbb R^{n\times n}$} are defined as 
\begin{equation}
\begin{aligned}
  F_{ii}(\eta)&=\frac{1}{X_{di}-X_{di}'}+B_{ii}, & i&\in\V\\
  F_{ij}(\eta)&=-B_{ij}\cos \eta_k=F_{ji}(\eta), & k&= (i,j)\in\E
\end{aligned}\label{eq:defFeta}
\end{equation}
and $F_{ij}\blue{(\eta)}=0$ otherwise. \alert{In addition, $\Sin(\cdot)$ denotes the element-wise sine function. }Since for realistic power networks $X_{di}>X_{di}'$, and $B_{ii}=\sum_{j\in\mathcal N_i}B_{ij}>0$ for all $i\in \V$, it follows that $F(\eta)>0$ for all $\eta$ \cite{powsysdynwiley}, \cite{kundur}.

Considering the  physical energy\footnote{For aesthetic reasons we define the Hamiltonian $H_p$ as $\w^n$ times the physical energy as the factor $1/\w^n$ appears in each of the energy functions. As a result, $H_p$ has the dimension of power instead of energy.}
stored in the generator and the transmission lines respectively, we define the Hamiltonian as
\begin{equation}
\begin{aligned} H_p&=\frac1{2}\sum_{i\in\V}\left(M_i^{-1}p_i^2+\frac{(E_{qi}'-E_{fi})^2}{X_{di}-X_{di}'}\right)\\
&+\frac1{2}\sum_{\mathclap{ k= (i,j)\in \E}}B_{ij}\left((E_{qi}')^2+(E_{qj}')^2-2E_{qi}'E_{qj}'\cos \eta_k \right)
\end{aligned}\label{eq:H3SG}
\end{equation}
where $\eta_{k}=\delta_i-\delta_j$. The first term of the Hamiltonian $H_p$ represents the \blue{shifted} kinetic energy stored in the rotors of the generators and the second term corresponds to the magnetic energy stored in the generator circuits. Finally, the last term of $H_p$ corresponds to the magnetic energy stored in the inductive transmission lines.

By \eqref{eq:H3SG}, the system \eqref{eq:3SGcomp} can be written in port-Hamiltonian form \blue{\cite{phsurvey}} as
\begin{equation}\label{eq:3SGmultiph}
\begin{aligned}
\dot x_p&=
  \begin{bmatrix}
    0 & D^T & 0                                & \\
    -D    & -A  & 0                                & \\
    0    & 0  & -R_q
  \end{bmatrix}
\nabla H_p
+  \begin{bmatrix}
    0&0\\
    I&-I\\
    0&0
  \end{bmatrix}u_p\\
y_p&=\begin{bmatrix}
    0&I &0\\
    0&-I&0
  \end{bmatrix}\nabla H_p=
          \begin{bmatrix}
            \w\\
            -\w
          \end{bmatrix}
\end{aligned}
\end{equation}
where $x_p=\col(\eta,p,E_q')$, $u_p=\col(P_g,P_d)$ and 
\begin{align*}
       R_q   & =(T_{d}')^{-1}(X_{d}-X_{d}')>0, \\
       \blue{T_{d}'} & =\diag_{i\in\V}\{\blue{T_{di}'}\}>0,    \\
X_{d}-X_{d}' & =\diag_{i\in\V}\{X_{di}-X_{di}'\}>0.
\end{align*}
For a study on the stability and equilibria of the flux-decay model \eqref{eq:3SGmultiph}, based on the Hamiltonian function \eqref{eq:H3SG}, we refer to \cite{trip2016internal}. The stability results established in \cite{trip2016internal} rely on the following assumption.
\begin{myass} 
	\label{ass:sec1}
	Given a constant input $u_p=\bar u_p$. There exists an equilibrium $(\bar \eta,\bar p,\bar E_q')$ of \eqref{eq:3SGmultiph} that satisfies $\bar \eta\in\text{im}\, D^T$, $\bar \eta\in (- \pi/2, \pi/2)^m$ and $\nabla^2H(\bar \eta,\bar p,\bar E_q')>0$.
\end{myass}
The assumption $\bar \eta\in (- \pi/2, \pi/2)^m$ is standard in studies on power grid stability and is also referred to as a security constraint \cite{trip2016internal}. In addition, the Hessian condition guarantees the existence of a local storage function around the equilibrium.  The  following result, which
establishes decentralized conditions for checking the positive
definiteness of the Hessian, was proven \blue{in} \cite{de2015modular}:
\begin{myprop}\label{prop:Hessiancond}
Let $\bar E_{qi}'\in \mathbb R_{>0}^n$ and $\bar \eta\in(-\pi/2,\pi/2)^m$. If for all $i\in\V$ we have
\begin{align*}
&\frac{1}{X_{di}-X_{di}'}+B_{ii}+\sum_{\mathclap{k=(i,j)\in\E}}B_{ij}\frac{E_{qj}'\sin^2\bar\eta_k}{E_{qi}'\cos\bar \eta_k}\\&>
\sum_{\mathclap{k=(i,j)\in \E}}B_{ij}\cos\bar \eta_k \left(1+\frac{\bar E_{qi}}{\bar E_{qj}}\tan^2\bar \eta_k\right)>0
\end{align*}
then $\nabla^2 H_p(\bar x_p)>0$. 
\end{myprop}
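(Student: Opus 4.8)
The plan is to reduce the claim to the positive definiteness of a single $n\times n$ Schur complement and then to establish that by a diagonal-dominance (equivalently, M-matrix) argument. First I would note that the momenta decouple: apart from the quadratic term $\tfrac12\sum_{i\in\V}M_i^{-1}p_i^2$, the Hamiltonian \eqref{eq:H3SG} is a function of $(\eta,E_q')$ only, so $\nabla^2H_p(\bar x_p)$ is block diagonal with respect to the splitting $p$ versus $(\eta,E_q')$, with $p$-block the constant positive-definite matrix $M^{-1}$. Hence it suffices to prove that $Q:=\nabla^2_{(\eta,E_q')}H_p(\bar\eta,\bar E_q')$ is positive definite. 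Differentiating \eqref{eq:H3SG} I obtain $Q$ in block form: since every summand of $H_p$ depending on $\eta$ involves only a single $\eta_k$, the $\eta\eta$-block is the diagonal matrix $\Gamma_\eta:=\diag_{k=(i,j)\in\E}\{B_{ij}\bar E_{qi}'\bar E_{qj}'\cos\bar\eta_k\}$; the $E_q'E_q'$-block is exactly the matrix $F(\bar\eta)$ of \eqref{eq:defFeta}; and the $\eta E_q'$-cross block $G\in\mathbb R^{m\times n}$ has, in row $k=(i,j)$, only the two nonzero entries $G_{ki}=B_{ij}\bar E_{qj}'\sin\bar\eta_k$ and $G_{kj}=B_{ij}\bar E_{qi}'\sin\bar\eta_k$.

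Because $\bar\eta\in(-\pi/2,\pi/2)^m$, $\bar E_q'\in\mathbb R_{>0}^n$ and $B_{ij}>0$, the diagonal block $\Gamma_\eta$ is positive definite, so by the Schur-complement criterion $Q\succ 0$ if and only if $S:=F(\bar\eta)-G^{\top}\Gamma_\eta^{-1}G\succ 0$. Inverting the diagonal $\Gamma_\eta$ and forming $G^{\top}\Gamma_\eta^{-1}G$ edge by edge, and using $\cos^2\theta+\sin^2\theta=1$, I find that edge $k=(i,j)$ contributes $B_{ij}\sin^2\bar\eta_k/\cos\bar\eta_k$ to the $(i,j)$-entry, which together with $F_{ij}(\bar\eta)=-B_{ij}\cos\bar\eta_k$ gives $S_{ij}=-B_{ij}/\cos\bar\eta_k$; likewise the $i$-th diagonal entry of $G^{\top}\Gamma_\eta^{-1}G$ equals $\sum_{k=(i,j)\in\E}B_{ij}\bar E_{qj}'\sin^2\bar\eta_k/(\bar E_{qi}'\cos\bar\eta_k)$, so that $S_{ii}=\tfrac{1}{X_{di}-X_{di}'}+B_{ii}-\sum_{k=(i,j)\in\E}B_{ij}\bar E_{qj}'\sin^2\bar\eta_k/(\bar E_{qi}'\cos\bar\eta_k)$. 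In particular $S$ is symmetric, has the sparsity pattern of $\G$, and has only nonpositive off-diagonal entries.

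It then remains to prove $S\succ 0$. Since $S$ is symmetric with nonpositive off-diagonal entries, it is positive definite as soon as there exists a componentwise-positive vector $v$ with $Sv\succ 0$ — equivalently, as soon as some positive diagonal rescaling $\Theta S\Theta$ is strictly diagonally dominant. I would take the rescaling dictated by the voltages (so that the ratios $\bar E_{qi}'/\bar E_{qj}'$ enter), and expand the $i$-th row inequality $(Sv)_i>0$; using $1/\cos\theta=\cos\theta+\sin^2\theta/\cos\theta$ and $1+\tan^2\theta=1/\cos^2\theta$, this becomes exactly the inequality assumed in the statement — the strict bound ``$\cdots>0$'' at the right end of the assumed chain being precisely what keeps the relevant diagonal quantities, and hence $v$, positive. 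Then $S\succ 0$, whence $Q\succ 0$ and $\nabla^2H_p(\bar x_p)\succ 0$. The routine parts are the reduction to $Q$ and the edge-by-edge bookkeeping for $S$; the delicate step will be the last one — identifying the correct positive scaling vector and pushing the trigonometric identities through without sign errors so that the rescaled diagonal-dominance conditions reproduce the stated inequality verbatim. (As a consistency check, when all voltages coincide the stated inequality collapses to $\tfrac{1}{X_{di}-X_{di}'}+B_{ii}>B_{ii}$, which holds trivially.)
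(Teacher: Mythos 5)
First, a point of comparison: the paper does not prove this proposition at all --- it is quoted from \cite{de2015modular} --- so your argument has to stand on its own. Its first two stages do: the $p$-block decouples, the $\eta\eta$-block is the diagonal matrix $\diag_{k=(i,j)}\{B_{ij}\bar E_{qi}'\bar E_{qj}'\cos\bar\eta_k\}$, the $E_q'E_q'$-block is $F(\bar\eta)$ (using $B_{ii}=\sum_{j\in\mathcal N_i}B_{ij}$), the cross block is as you describe, and the Schur complement indeed has entries $S_{ij}=-B_{ij}/\cos\bar\eta_k$ and $S_{ii}=\tfrac{1}{X_{di}-X_{di}'}+B_{ii}-\sum_{k=(i,j)}B_{ij}\bar E_{qj}'\sin^2\bar\eta_k/(\bar E_{qi}'\cos\bar\eta_k)$. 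The gap is exactly the step you flag as delicate: no single positive vector $v$ makes the row conditions $(Sv)_i>0$ coincide with the displayed inequality. Matching term by term forces $v_j/v_i$ to depend on $\bar\eta_k$ and to be inconsistent with the ratio demanded by row $j$, so the stated condition is not a weighted diagonal-dominance condition for $S$. Worse, taken literally the hypothesis does not imply the conclusion: with two nodes, $B_{12}=1$, $\bar\eta=\pi/3$, $\bar E_{q1}'=1$, $\bar E_{q2}'=10$, $1/(X_{d1}-X_{d1}')=0.01$, $1/(X_{d2}-X_{d2}')=14.4$, both displayed chains hold ($16.01>0.65>0$ and $15.55>15.5>0$), yet the $(\eta,E_{q1}')$ principal minor of $\nabla^2H_p$ equals $5\cdot 1.01-(10\sin\tfrac\pi3)^2<0$. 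Your own sanity check, rerun with equal voltages but nonzero angles, already exposes this: the printed condition then reduces to $\tfrac{1}{X_{di}-X_{di}'}+B_{ii}>\sum_k B_{ij}\cos\bar\eta_k$, which holds always, while $S_{ii}\to-\infty$ as $\bar\eta_k\to\pi/2$.

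The culprit is the sign of the first sum on the left-hand side: that sum is part of $S_{ii}$ and must be subtracted, not added (a transcription slip relative to \cite{de2015modular}). With that sign corrected your framework closes, but not via a single congruence $\Theta S\Theta$. Instead, work with the quadratic form $u^TSu=\sum_iS_{ii}u_i^2-2\sum_{k=(i,j)}\tfrac{B_{ij}}{\cos\bar\eta_k}u_iu_j$, split each off-diagonal weight as $\tfrac{B_{ij}}{\cos\bar\eta_k}=B_{ij}\cos\bar\eta_k+B_{ij}\tfrac{\sin^2\bar\eta_k}{\cos\bar\eta_k}$, and bound the two pieces with \emph{different} Young inequalities: $2|u_iu_j|\le u_i^2+u_j^2$ for the first and $2|u_iu_j|\le \tfrac{\bar E_{qi}'}{\bar E_{qj}'}u_i^2+\tfrac{\bar E_{qj}'}{\bar E_{qi}'}u_j^2$ for the second. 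Collecting the coefficient of $u_i^2$ gives precisely $\tfrac{1}{X_{di}-X_{di}'}+B_{ii}-\sum_kB_{ij}\tfrac{\bar E_{qj}'\sin^2\bar\eta_k}{\bar E_{qi}'\cos\bar\eta_k}-\sum_kB_{ij}\cos\bar\eta_k\bigl(1+\tfrac{\bar E_{qi}'}{\bar E_{qj}'}\tan^2\bar\eta_k\bigr)$, whose positivity for all $i$ yields $S>0$ and hence $\nabla^2H_p(\bar x_p)>0$. So: correct skeleton, but the final step needs this edge-wise splitting rather than an M-matrix rescaling, and the proposition's inequality needs its sign repaired before either argument can succeed.
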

\noindent It can be verified that the condition stated in Proposition \ref{prop:Hessiancond} is satisfied if the following holds \cite{de2015modular}:
\begin{itemize}
\item the generator reactances are small compared to the transmission line reactances
\item the voltage (angle) differences are small.
\end{itemize}
Remarkably, these conditions hold for a typical operation point in power transmission networks. 



\subsection{Social welfare problem}
\label{sec:soci-welf-probl}
We define the social welfare by $S(P_g,P_d):=U(P_d)-C(P_g)$, which consists of a  utility function $U(P_d)$ of the power consumption $P_d$ and the  cost $C(P_g)$ associated to the power production $P_g$.  We assume that $C(P_g),U(P_d)$ are strictly convex and strictly concave functions respectively. 
\begin{myrem}
  It is also possible to include mutual costs and utilities among the different control areas, provided that the convexity/concavity assumption is satisfied.
\end{myrem}
The objective is to maximize the social welfare while achieving zero frequency deviation.  
By analyzing the equilibria of \eqref{eq:3SGmultimachine}, it follows that a necessary condition for zero frequency deviation is $\1^T P_d=\1^T P_g$ \cite{trip2016internal}, i.e.,  the total supply must match the total demand. It can be noted that $(P_g,P_d)$ is a solution to the latter equation if and only if there exists a $v\in \mathbb R^{m_c}$ satisfying $D_\comm v-P_g+P_d=0$ where $D_\comm\in\mathbb R^{n\times m_c}$ is the incidence matrix of some connected \emph{communication} graph with $m_c$ edges and $n$ nodes.  Because of the latter equivalence, we  consider the following convex minimization problem: 
\begin{subequations}\label{eq:minprobbasic}
\begin{align}
\min_{P_g,P_d,v} \ \  & -S(P_g,P_d)=C(P_g)-U(P_d)\\
\text{s.t.}\ \  & D_\comm v-P_g+P_d=0.
\end{align}
\end{subequations}


\blue{\begin{myrem}

Although this paper focuses on optimal \emph{active} power sharing, we stress that it is also possible to consider (optimal) \emph{reactive} power sharing simultaneously, see e.g. \cite{de2015modular} \new{for more details}.
\end{myrem}}

The \blue{Lagrangian corresponding to \eqref{eq:minprobbasic}} is given by 
\begin{align}\label{eq:L}
\mathcal L &= C(P_g)-U(P_d)+\lambda^T(D_\comm v-P_g+P_d)
\end{align}
with Lagrange multipliers $\lambda \in\mathbb R^n$. The resulting first-order optimality conditions are given by the Karush–Kuhn–Tucker (KKT) conditions 
\begin{equation}\label{eq:KKTcondbasic}
\begin{aligned}
	\nabla C(\bar P_g)-\bar \lambda  & =0, \\
	-\nabla U(\bar P_d)+\bar \lambda & =0, \\
	D_\comm^T\bar \lambda                  & =0, \\
	D_\comm \bar v-\bar P_g+\bar P_d       & =0.
\end{aligned}
\end{equation}
Since the minimization problem is convex, strong duality holds and it follows that $(\bar P_g,\bar P_d,\bar v)$ is an optimal solution to \eqref{eq:minprobbasic} if and only if there exists an $\bar \lambda\in\mathbb R^n$ that satisfies \eqref{eq:KKTcondbasic} \cite{convexopt}. 



\section{Basic primal-dual gradient controller}\label{sec:gradmeth}
In this section we design the basic dynamic pricing algorithm which will be used as the starting point for the controllers designs discussed in Section \ref{sec:vari-basic-contr}.  Its dynamics is obtained by applying the primal-dual gradient method \cite{arrow_gradmethod,AGC_ACC2014,zhangpapa} to the minimization problem \eqref{eq:minprobbasic}, \blue{resulting in}
\begin{subequations}\label{eq:graddistalvswing}
\begin{align}
\tau_{g}\dot P_g&=-\nabla C(P_g)+\lambda+u_{c}^g\\
\tau_d\dot P_d&=\nabla U(P_d)-\lambda+u_{c}^d\\
\tau_{ v}\dot  v&=-D_\comm^T\lambda \label{eq:vdyn}\\
\tau_\lambda\dot \lambda&=D_\comm  v-P_g+P_d.\label{lamdadyn}
\end{align}
\end{subequations}
Here we introduce additional inputs $u_c=\col (u_{c}^g,u_{c}^d)$ which are  to be specified later on, and $\tau_c:=\text{blockdiag}(\tau_g,\tau_d,\tau_v,\tau_\lambda)>0$ are controller design parameters.  Recall from Section \ref{sec:soci-welf-probl} that there is freedom in choosing a communication network and the associated incidence matrix. Depending on the application, one may prefer  all-to-all communication where the underlying graph is complete, or communication networks where its associated graph is a star, line or cycle graph. In addition, $\tau_c$ determines the converge rate of the dynamics \eqref{eq:graddistalvswing}; a large $\tau_c$ gives a slow convergence rate whereas a small $\tau_c$ gives a fast convergence rate. 

Observe\alert{ furthermore} that the dynamics \eqref{eq:graddistalvswing} has a clear economic interpretation \cite{dynpricehis},  \cite{kiani_anna}, \cite{alv_meng_power_coupl_market}: each power producer \blue{aims at maximizing} their own profit, which occurs whenever their individual marginal cost is equal to the local price $\lambda_i+u_{ci}^g$. At the same time, each consumer maximizes its own utility but is penalized by the local price $\lambda_i-u_{ci}^d$. 

The equations \eqref{eq:vdyn}, \eqref{lamdadyn} represent the distributed dynamic pricing mechanism where the quantity $v$ represents a \emph{virtual} power flow along the edges of the communication graph with incidence matrix $D_c$. We emphasize \emph{virtual}, since $v$ may not correspond to the real physical power flow as the communication graph may be different than the physical network topology.   Equation \eqref{lamdadyn} shows that the local price $\lambda_i$ rises if the  power demand plus power outflow at node $i\in\V$ is greater than the local power supply plus power inflow of power at node $i$ and vice versa. The inputs $u_{c}^g, u_{c}^d$ are interpreted as additional penalties or prices that are assigned to the power producers and consumers respectively. These inputs can be chosen appropriately to compensate for the frequency deviation in the physical power network as we will show now.


To this end, define the variables $x_c=(x_g,x_d,x_v,x_\lambda) = (\tau_gP_g,\tau_dP_d,\tau_ v v,\tau_\lambda\lambda)=\tau_cz_c$ and note that, in the sequel, we interchangeably write the system dynamics in terms of both $x_c$ and $z_c$ for ease of notation. In these new variables 
the dynamics \eqref{eq:graddistalvswing} admits a  natural \pH\  representation  \cite{LHMNLC}, which is given by  
\begin{align}\label{eq:dynpricsys}
\dot x_c&=
\begin{bmatrix}
	0  & 0 & 0       & I          \\
	0  & 0 & 0       & -I         \\
	0  & 0 & 0       & -D_\comm^T \\
	-I & I & D_\comm & 0
\end{bmatrix}\nabla H_c(x_c)+\nabla S(z_c) \nonumber\\
&+\begin{bmatrix}
I&0 \\
0& I\\
0&0\\
0&0
\end{bmatrix}u_c\\
y_c&=\begin{bmatrix}
I&0&0&0\\
0&I&0&0
\end{bmatrix}\nabla H_c(x_c)=\begin{bmatrix}
P_g\\P_d
\end{bmatrix},\nonumber\\
H_c(x_c)&=\frac{1}{2}x_c^T\tau_c^{-1}x_c. \label{eq:hc}
\end{align}
Note that the system \eqref{eq:dynpricsys} is indeed a \pH\ system\footnote{Strictly speaking, \eqref{eq:dynpricsys} is an \emph{incremental} \pH\ system \cite{phsurvey}.} since  $S$ is concave  and therefore satisfies the \blue{incremental passivity} property 
\begin{align*}
(z_1-z_2)^T(\nabla S(z_1)-\nabla S(z_2))\leq0, \ \forall z_1,z_2\in\mathbb R^{3n+m_c}.
\end{align*}
The \pH\ controller \eqref{eq:dynpricsys} is interconnected to the physical network \eqref{eq:3SGmultiph} by taking 
$u_c=-y_p, u_p=y_c$. Define the extended vectors of 
variables by 
\begin{align}\label{xz}
x&:=
\begin{bmatrix}
	I & 0 & 0 &  0      & 0      & 0      & 0            \\
	0 & M & 0 &  0      & 0      & 0      & 0            \\
	0 & 0 & I &  0      & 0      & 0      & 0            \\
	0 & 0 & 0 &  \tau_g & 0      & 0      & 0            \\
	0 & 0 & 0 &  0      & \tau_d & 0      & 0            \\
	0 & 0 & 0 &  0      & 0      & \tau_v & 0            \\
	0 & 0 & 0 &  0      & 0      & 0      & \tau_\lambda
\end{bmatrix}
\begin{bmatrix}
\eta\\
\w\\
E_q'\\
P_g\\
P_d\\
v\\
\lambda
\end{bmatrix}=:\tau z.
\end{align} 
Then the closed-loop \pH\ system takes the form 
\begin{equation}\label{eq:clphsys2}
\begin{aligned}
\dot x&=
\begin{bmatrix}
0  & D^T & 0    & 0  & 0  & 0       & 0          \\
-D & -A  & 0    & I  & -I & 0       & 0          \\
0  & 0   & -R_q & 0  & 0  & 0       & 0          \\
0  & -I  & 0    & 0  & 0  & 0       & I          \\
0  & I   & 0    & 0  & 0  & 0       & -I         \\
0  & 0   & 0    & 0  & 0  & 0       & -D_\comm^T \\
0  & 0   & 0    & -I & I  & D_\comm & 0
\end{bmatrix}\nabla H(x)\\&+\nabla S(z),
\end{aligned}
\end{equation}
where $H=H_p+H_c$ is equal to  the sum of the energy function \eqref{eq:H3SG} corresponding to the physical model, and the controller Hamiltonian \eqref{eq:hc}.
In the sequel we write \eqref{eq:clphsys2} more compactly as 
\begin{align*}
  \dot x=(J-R)\nabla H(x)+\nabla S(z),
\end{align*}
where $R=R^T\geq0, J=-J^T$. 
We define the equilibrium set of \eqref{eq:clphsys2}, expressed in the  variable $z$, by 
\begin{align}\label{eq:omega2}
\eqset_1&=\{ \bar z \ | \  \bar z \text{ is an equilibrium of } \eqref{eq:clphsys2} \}.
\end{align}
Note that each $\bar z\in\eqset_1$ satisfies the  optimality conditions \eqref{eq:KKTcondbasic} and simultaneously the zero frequency constraints of the physical network \eqref{eq:3SGmultiph} given by $\bar \w=0$. Hence, $ \eqset_1$ corresponds to the desired equilibria, and the next theorem states  the convergence to this set of optimal points.




\begin{mythm}\label{thm:grad}  For every $\bar z\in  \eqset_1$ \new{satisfying Assumption \ref{ass:sec1}} there exists a \alert{open }neighborhood $\new{\Upsilon}$ around $\bar z$ where all trajectories $z$ satisfying \eqref{eq:clphsys2} with initial conditions in $\new{\Upsilon}$ converge to the set $\eqset_1$. \new{In addition, the convergence of each such trajectory is to a point.} 
\end{mythm}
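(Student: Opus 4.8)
The natural approach is a Lyapunov/LaSalle-type argument built on the port-Hamiltonian structure. Since the closed-loop system is $\dot x=(J-R)\nabla H(x)+\nabla S(z)$ with $J=-J^T$, $R=R^T\ge0$, and $x=\tau z$, I would first fix an equilibrium $\bar z\in\eqset_1$ satisfying Assumption~\ref{ass:sec1} and introduce the \emph{shifted Hamiltonian}
\begin{align*}
  \bar H(x)=H(x)-H(\bar x)-\nabla H(\bar x)^T(x-\bar x),
\end{align*}
i.e.\ the Bregman distance of $H$ relative to $\bar x$. Because $H_c$ is quadratic and positive definite, and $H_p$ satisfies $\nabla^2H_p(\bar x_p)>0$ by Assumption~\ref{ass:sec1} (via Proposition~\ref{prop:Hessiancond}), the Hessian $\nabla^2 H(\bar x)>0$, so $\bar H$ is positive definite and has a strict local minimum at $\bar x$ on the relevant domain (restricting $\eta$ near $\bar\eta\in(-\pi/2,\pi/2)^m$). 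This gives a valid local Lyapunov candidate; I would choose the neighborhood $\Upsilon$ as a sublevel set of $\bar H$ contained in that domain, which makes $\Upsilon$ forward invariant.

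The second step is to differentiate $\bar H$ along solutions. Using $\dot x=(J-R)\nabla H+\nabla S(z)$ and the equilibrium relation $0=(J-R)\nabla H(\bar x)+\nabla S(\bar z)$, one gets
\begin{align*}
  \dot{\bar H}=-(\nabla H(x)-\nabla H(\bar x))^TR(\nabla H(x)-\nabla H(\bar x))
  +(\nabla H(x)-\nabla H(\bar x))^T(\nabla S(z)-\nabla S(\bar z)).
\end{align*}
The first term is $\le0$ since $R\ge0$. For the second term, note $\nabla H(x)-\nabla H(\bar x)=\tau^{-1}(x-\bar x)=z-\bar z$ on the controller block (and more generally the gradient differences of the quadratic parts align with $z-\bar z$), so the cross term reduces to $(z-\bar z)^T(\nabla S(z)-\nabla S(\bar z))\le0$ by concavity of $S$ (the incremental passivity inequality quoted after \eqref{eq:dynpricsys}). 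Hence $\dot{\bar H}\le0$, so $\Upsilon$ is invariant and trajectories are bounded.

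The third step is the invariance argument. Since $\bar H$ is radially unbounded on $\Upsilon$ and nonincreasing, by LaSalle's invariance principle every trajectory converges to the largest invariant set contained in $\{\dot{\bar H}=0\}$. On that set $R\nabla H(x)=R\nabla H(\bar x)$ and $(z-\bar z)^T(\nabla S(z)-\nabla S(\bar z))=0$; the former forces $\w=0$ (the damping acts on the frequency block through $A>0$, and $R_q>0$ forces $E_q'$ to its equilibrium value consistent with $\eta$), and strict convexity/concavity of $C,U$ forces $P_g=\bar P_g$, $P_d=\bar P_d$. Propagating these equalities through the dynamics (using $\dot\w=0$, $\dot P_g=\dot P_d=0$, $\dot\lambda=0$, $\dot v=0$ on the invariant set, together with connectedness of the physical and communication graphs and $\bar\eta\in\im D^T\cap(-\pi/2,\pi/2)^m$) pins down $\lambda$, $v$, and $\eta$, so the invariant set is contained in $\eqset_1$ — in fact it consists of equilibria. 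Finally, convergence \emph{to a point}: since the omega-limit set is a connected subset of $\eqset_1$ consisting of equilibria, and each point of $\eqset_1\cap\Upsilon$ is Lyapunov-stable (apply the same $\bar H$-argument centered at any nearby equilibrium), a standard argument shows the omega-limit set is a singleton; I would invoke the result on convergence of primal-dual/gradient flows to a point (as in the cited \cite{cherukuri2016asymptoticcaratheodory}-type reasoning), using that $\bar H$ decreases and that stable equilibria in a continuum of equilibria attract nearby trajectories to a single point.

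**Main obstacle.** The delicate part is the invariance analysis on $\{\dot{\bar H}=0\}$: extracting, from $\w=0$, $P_g=\bar P_g$, $P_d=\bar P_d$ alone, that the remaining variables $\eta,E_q',\lambda,v$ are also forced to their equilibrium values. This requires carefully using the algebraic structure of \eqref{eq:clphsys2} restricted to the invariant set (the off-diagonal $\pm I$, $\pm D_c$, $D^T$ blocks), the positivity $F(\eta)>0$ and the security constraint $\eta\in(-\pi/2,\pi/2)^m$ to invert the power-flow relation, and connectedness of both graphs to conclude $\lambda$ is constant and then uniquely determined; the uniqueness of $\eta$ given $\bar p=0$, $\bar E_q'$, and the fixed net injections is exactly where $\bar\eta\in\im D^T$ and the Hessian condition are needed. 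The convergence-to-a-point claim is the second subtlety, since $\eqset_1$ is generally not a single point (there is a kernel direction in $v$, and possibly in $\lambda$ depending on the communication graph), so one genuinely needs the stability-of-each-equilibrium plus connectedness-of-omega-limit-set argument rather than a naive strict-Lyapunov conclusion.
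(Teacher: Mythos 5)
Your proposal is correct and follows essentially the same route as the paper: the shifted (Bregman) Hamiltonian $\bar H$ as a local Lyapunov function, the dissipation inequality combining $R\ge0$ with the incremental passivity of $\nabla S$ and strict concavity in $(P_g,P_d)$, forward invariance of a compact sublevel set guaranteed by $\nabla^2 H(\bar x)>0$ under Assumption~\ref{ass:sec1}, LaSalle's invariance principle, and finally a singleton omega-limit-set argument. The paper makes your ``each nearby equilibrium is Lyapunov stable, hence $\Omega(x)$ is a singleton'' step concrete by exhibiting two disjoint compact invariant sublevel sets around two hypothetical distinct omega-limit points and deriving a contradiction, but this is the same idea.
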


\begin{proof} Let $\bar z\in\eqset_1$ and define the shifted Hamiltonian $\bar H$  around $\bar x:=\tau\bar z$ as \cite{phsurvey}, \cite{trip2016internal}
\begin{align}\label{eq:shHam}
&\bar H(x)=H(x)-(x-\bar x)^T\nabla H(\bar x)-H(\bar x).
\end{align}
After rewriting, the closed-loop \pH\ system \eqref{eq:clphsys2} is equivalently described by 
\begin{equation*}
\begin{aligned}
\dot x&=
(J-R)\nabla \bar H(x)+\nabla S(z)-\nabla S(\bar z).
\end{aligned}
\end{equation*}
The shifted Hamiltonian $\bar H$ satisfies 
\begin{equation}
\begin{aligned}\label{eq:shdissineq}
\dot{\bar H}&=-\w^T A\w+(z-\bar z)^T(\nabla S(z)-\nabla S(\bar z))\\
&-(\nabla_{E_q'}\bar H)^TR_q\nabla_{E_q'}\bar H\leq0,
\end{aligned}
\end{equation}
where equality holds if and only if $\nabla_{E_q'}\bar H(x)=\nabla_{E_q'} H(x)=0,\w=0, P_g=\bar P_g, P_d=\bar P_d$ since $S(z)$ is strictly concave in $ P_g $ and $P_d$.  \alert{Bearing in mind Assumption \ref{ass:sec1}, it is observed that  $\bar H(\bar x)=0$ and $\bar H(x)>0$ for all $x\neq \bar x$ in a  sufficiently small \alert{open }neighborhood  around $\bar x$. }\new{Bearing in mind Assumption \ref{ass:sec1}, it is observed that  $ \nabla^2 H(x)=\nabla^2 \bar H(x)>0$ for all $x$ in a  sufficiently small open neighborhood around $\bar x$.}
Hence, as $ \dot {\bar H} \leq  0 $, there exists a compact \new{sub}level set $\Upsilon$ of $\bar H$ around $\bar z$ \new{contained in such neighborhood}, which is forward invariant. \alert{Take a neighborhood  of $\bar z$ contained in such a level set.} By LaSalle’s \new{invariance} principle, \new{each} the solution \new{with initial conditions in $\new{\Upsilon}$} converges to the largest invariant set $\mathcal S$ contained in
$ \Upsilon \cap \{z\ | \ \nabla_{E_q'} H(x)=0,\w = 0,\ P_g = \bar P_g , \ P_d =\bar P_d\}. $ On such invariant set $\lambda=\bar \lambda $ and $\eta,v,E_q'$ are constant. \alert{We conclude that }\new{Hence,} $z$ converges to $ \mathcal S\subset \eqset_1$ as $t\to \infty$. 

\new{Finally, we prove that the  convergence of each solution  of \eqref{eq:clphsys2} initializing in $\new{\Upsilon}$ is to a point. This is equivalent to proving that its omega-limit set $\Omega(x)$ is a singleton. Since the solution $x$ is bounded, $\Omega(x)\neq \emptyset$ by the Bolzano-Weierstrass theorem \cite{rudin1964principles}. By contradiction, suppose now that there exist two distinct point in $\Omega(x)$, say $\bar x_1,\bar x_2\in\Omega(x),\bar x_1\neq \bar x_2$.  Then there exists $\bar H_1(x),\bar H_2(x)$ defined by \eqref{eq:shHam} with respect to $\bar x_1,\bar x_2$ respectively and scalars $c_1,c_2\in \mathbb R_{>0}$ such that $\bar H_{1}^{-1}(\leq c_1):=\{x \ | \ \bar H_1(x)\leq c_1\}, \bar H_{2}^{-1}(\leq c_2):=\{x \ | \ \bar H_2(x)\leq c_2\}$ are disjoint and compact as the Hessian of $\bar H_1,\bar H_2$ is positive definite in  the neighborhood $\Upsilon$. Since  each trajectory $z$ converges to $\mathcal Z_1$ as proven above, it follows that  $\tau^{-1}\bar x_1,\tau^{-1}\bar x_2\in \eqset_1$. Together with $\bar x_1\in\Omega(x)$, this implies that there exists a finite time  $t_1>0$ such that $x(t)\in \bar H_{1}^{-1}(\leq c_1) $ for all $t\geq t_1$ as the set  $\bar H_{1}^{-1}(\leq c_1)$ is invariant by the dissipation inequality \eqref{eq:shdissineq}. Similarly, there exists a finite time $t_2>0$ such that $x(t)\in \bar H_{2}^{-1}(\leq c_2) $ for all $t\geq t_2$. This implies that the solution $x(t)$ satisfies $x(t)\in \bar H_{1}^{-1}(\leq c_1)\cap \bar H_{1}^{-1}(\leq c_1)=\emptyset$ for $t\geq \max(t_1,t_2)$ which is a contradiction.  This concludes the proof. 
}
\end{proof}

\section{Variations in the controller design}
\label{sec:vari-basic-contr}
In this section we propose several variations and extensions of the controller designed in the previous section. These include, among other things, the possibility to incorporate nodal power constraints, and line congestion in conjunction with transmission costs into the social welfare problem. 

\subsection{Including nodal power constraints}
\label{sec:nodcong}
The results of Section \ref{sec:gradmeth} can be extended to the case where nodal constraints on the power production and consumption are included into the optimization problem \eqref{eq:minprobbasic}. To this end, consider the social welfare problem 
\begin{subequations}\label{eq:minprobnodcong2}
\begin{align}
\min_{P_g,P_d,v} \ \  & -S(P_g,P_d):=C(P_g)-U(P_d)\\
\text{s.t.}\ \  & D_\comm v-P_g+P_d=0,\\
 &g(P_g,P_d) \preceq 0\label{eq:gineq}
\end{align}
\end{subequations}
where $g:\mathbb R^{2n}\to \mathbb{R}^l$ is a convex function\blue{.} \alert{ and $\preceq$ denotes the element-wise inequality.} 
\begin{myrem}\label{rem:g1g2g3g4}
Note that \eqref{eq:gineq}  captures the convex  inequality constraints \blue{considered} in the existing literature. For example, by choosing $g$ as 
\begin{align*}
g(P_g,P_d)=\begin{bmatrix}
g_1(P_g,P_d)\\
g_2(P_g,P_d)\\
g_3(P_g,P_d)\\
g_4(P_g,P_d)
\end{bmatrix}=\begin{bmatrix}
P_g-P_g^{\max}\\
P_g^{\min}-P_g\\
P_d-P_d^{\max}\\
P_d^{\min}-P_d\\
\end{bmatrix},
\end{align*}
the resulting inequality constraints \eqref{eq:gineq} become  $P_g^{\min}\preceq P_g\preceq P_g^{\max}, \ P_d^{\min}\preceq P_d\preceq P_d^{\max}$ which, among others, are used in \cite{zhang1achieving,zhangpapa,zhang_papa_tree}.
\end{myrem}
In the sequel, we assume that \eqref{eq:minprobnodcong2} satisfies Slater's condition \cite{convexopt}. As a result, $(\bar P_g,\bar P_d,\bar v)$ is an optimal solution to \eqref{eq:minprobnodcong2} if and only if there exists $\bar \lambda\in \mathbb R^n, \bar \mu\in \mathbb R^l_{\geq0}$ satisfying the following KKT optimality conditions: 
\begin{equation}\label{eq:KKTcondnodal2}
\begin{aligned}
	\nabla C(\bar P_g)-\bar \lambda +\frac{\p g}{\p P_g}(\bar P_g,\bar P_d)\bar \mu                                         =0,&          \\
	-\nabla U(\bar P_d)+\bar \lambda +\frac{\p g}{\p P_d}(\bar P_g,\bar P_d)\bar \mu             =0,&          \\
	D_\comm \bar v-\bar P_g+\bar P_d                 =0, \qquad           	D_\comm^T\bar \lambda        =0,&          \\
g(\bar P_g,\bar P_d) \preceq 0, \qquad \bar \mu\succeq 0, \qquad 
	\bar \mu^Tg(\bar P_g,\bar P_d)               =0.&
\end{aligned}
\end{equation}
Next, we introduce the following subsystems \cite{LHMNLC}, \cite{jokic2009constrained}
\begin{equation}\label{eq:xipassys2}
\begin{aligned}
  \dot x_{\mu_i}&=(g_i(w_i))_{\mu_i}^+:= \begin{cases}
 g_i(w_i) & \text{if } \mu_i>0\\
 \max \{0,g_i(w)\} & \text{if } \mu_i=0
 \end{cases}\\
 y_{\mu_i}&=\nabla g_i(w_i)\nabla H_{\mu_i}(x_{\mu_i}), \  H_{\mu_i}(x_{\mu_i})=\frac{1}{2}x_{\mu_i}^T\tau_{\mu_i}^{-1}x_{\mu_i}
\end{aligned}
\end{equation} 
with  state $x_{\mu_i}:=\tau_{\mu_i}\mu_i\in\mathbb R_{\geq0}$, outputs $y_{\mu_i}\in\mathbb R^l$,  inputs $w_i\in\mathbb R^{2n}$, and $i\in\mathcal I:=\{1,\ldots,l\}$. \blue{Here $g_i(.)$ is the $i$'th entry of the vector-valued function $g(.)=\col_{i\in\V}\{g_i(.)\}$.} Note that, for a given $i\in\mathcal I$ and for a constant input $\bar w_i$, the equilibrium set $ \eqset_{\mu_i} $ of \eqref{eq:xipassys2} is characterized by all $(\bar \mu_i,\bar w_i)$ satisfying
\begin{align}\label{eq:omegamu}
g_i(\bar w_i)\leq 0, \qquad \bar \mu_i\geq 0, \qquad \bar \mu_i=0\vee g_i(\bar w_i)=0.
\end{align}
More formally, for $i\in\mathcal I$ the equilibrium set $\eqset_{\mu_i}$ of \eqref{eq:xipassys2} is given by 
\begin{align*}
\eqset_{\mu_i}:=\{(\bar \mu_i,\bar w_i)\  | \ (\bar \mu_i,\bar w_i) \text{ satisfies } \eqref{eq:omegamu}\}.
\end{align*}
\begin{myrem}
In case the inequality constraints of Remark \ref{rem:g1g2g3g4} (e.g. $P_g\preceq P_g^{\max}$)  are considered, the subsystems \eqref{eq:xipassys2} take the decentralized form 
\begin{equation}
\begin{aligned}
  \dot x_{\mu_i}&=(P_{gi}-P_{gi}^{\max})_{\mu_i}^+= \begin{cases}
 P_{gi}-P_{gi}^{\max} & \text{if } \mu_i>0\\
 \max \{0,P_{gi}-P_{gi}^{\max}\} & \text{if } \mu_i=0
 \end{cases}\\
 y_{\mu_i}&=\nabla H_{\mu_i}(x_{\mu_i}), \quad   H_{\mu_i}(x_{\mu_i})=\frac{1}{2}x_{\mu_i}^T\tau_{\mu_i}^{-1}x_{\mu_i}, \quad i\in \V,
\end{aligned}
\end{equation} 
and similar expressions can be given for the remaining inequalities $P_g^{\min}\preceq P_g, \ P_d^{\min}\preceq P_d\preceq P_d^{\max}$.
\end{myrem}
The subsystems \eqref{eq:xipassys2} have the following passivity property \cite{LHMNLC}.
\begin{myprop}[\cite{LHMNLC}]\label{prop:musys}
Let $i\in \mathcal I,(\bar \mu_i,\bar w_i)\in\eqset_{\mu_i}$  and define $  \bar y_{\mu_i}:=\nabla g_i(\bar w_i)\bar \mu_i. $ Then \eqref{eq:xipassys2} is passive with respect to the shifted external port-variables $\tilde w_i:=w_i-\bar w_i, \tilde y_{\mu_i}:=y_{\mu_i}-\bar y_{\mu_i}$. \blue{Additionally, $( \mu_i, w_i)\to\eqset_{\mu_i}$ as $t\to\infty$  for $(\mu_i, w_i), w_i=\bar w_i$ satisfying \eqref{eq:xipassys2}.}
\end{myprop}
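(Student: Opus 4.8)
The plan is to verify the two claims separately, starting from the structure of the subsystem \eqref{eq:xipassys2}. For the passivity claim, I would take as storage function the shifted Hamiltonian $\bar H_{\mu_i}(x_{\mu_i}) = \tfrac{1}{2}\tau_{\mu_i}^{-1}(x_{\mu_i}-\bar x_{\mu_i})^2 = \tfrac12\tau_{\mu_i}(\mu_i-\bar\mu_i)^2$, which is nonnegative and zero exactly at $\bar\mu_i$. Differentiating along solutions gives $\dot{\bar H}_{\mu_i} = (\mu_i-\bar\mu_i)\,\dot x_{\mu_i} = (\mu_i - \bar\mu_i)\,(g_i(w_i))_{\mu_i}^+$. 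The key step is the elementary inequality
\begin{align*}
(\mu_i-\bar\mu_i)\,(g_i(w_i))_{\mu_i}^+ \le (\mu_i - \bar\mu_i)\,g_i(w_i),
\end{align*}
which holds because the projection operator $(\cdot)_{\mu_i}^+$ only modifies $g_i(w_i)$ when $\mu_i=0$ and $g_i(w_i)<0$, in which case the left-hand side is $0$ while the right-hand side is $(0-\bar\mu_i)g_i(w_i) = -\bar\mu_i g_i(w_i)\ge 0$ since $\bar\mu_i\ge0$. Then, using convexity of $g_i$ (which gives $g_i(w_i) - g_i(\bar w_i) \le \nabla g_i(w_i)^T(w_i-\bar w_i)$, actually I would use the gradient inequality in the form $g_i(\bar w_i) \ge g_i(w_i) + \nabla g_i(w_i)^T(\bar w_i - w_i)$) together with the complementarity condition $\bar\mu_i g_i(\bar w_i)=0$ and $g_i(\bar w_i)\le 0$ from \eqref{eq:omegamu}, one bounds
\begin{align*}
(\mu_i-\bar\mu_i)g_i(w_i) &\le (\mu_i - \bar\mu_i)\nabla g_i(w_i)^T(w_i - \bar w_i) + \mu_i g_i(\bar w_i) - \bar\mu_i g_i(\bar w_i)\\
&\le (\mu_i - \bar\mu_i)\nabla g_i(w_i)^T(w_i - \bar w_i),
\end{align*}
where the last inequality uses $\mu_i\ge0$ and $g_i(\bar w_i)\le0$ to drop $\mu_i g_i(\bar w_i)\le0$, and $\bar\mu_i g_i(\bar w_i)=0$. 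Recognizing $y_{\mu_i} = \nabla g_i(w_i)\nabla H_{\mu_i}(x_{\mu_i}) = \nabla g_i(w_i)\mu_i$ and $\bar y_{\mu_i} = \nabla g_i(\bar w_i)\bar\mu_i$ — note that one must be slightly careful here, since $\tilde y_{\mu_i} = y_{\mu_i}-\bar y_{\mu_i}$ involves $\nabla g_i$ evaluated at two different points — I would conclude $\dot{\bar H}_{\mu_i} \le \tilde w_i^T\tilde y_{\mu_i}$, establishing passivity with respect to $(\tilde w_i,\tilde y_{\mu_i})$.

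For the convergence claim, fix the constant input $w_i = \bar w_i$. Then $\dot x_{\mu_i} = (g_i(\bar w_i))_{\mu_i}^+$ and the storage inequality reduces to $\dot{\bar H}_{\mu_i} \le 0$, so $\mu_i$ is bounded and the solution stays in a compact sublevel set. Applying LaSalle's invariance principle, the solution converges to the largest invariant set on which $\dot{\bar H}_{\mu_i}=0$. On that set, retracing the chain of inequalities forces $(\mu_i-\bar\mu_i)(g_i(\bar w_i))_{\mu_i}^+ = 0$; combined with the monotone nature of the scalar dynamics $\dot\mu_i = \tau_{\mu_i}^{-1}(g_i(\bar w_i))_{\mu_i}^+$ (which drives $\mu_i$ monotonically toward a value satisfying the complementarity relations \eqref{eq:omegamu}), one concludes $(\mu_i,\bar w_i)\to\eqset_{\mu_i}$. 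A cleaner route: since $g_i(\bar w_i)$ is now a fixed number, the scalar ODE $\dot\mu_i = \tau_{\mu_i}^{-1}(g_i(\bar w_i))_{\mu_i}^+$ with $\mu_i\ge0$ can be analyzed directly — if $g_i(\bar w_i)>0$ then $\mu_i\to\infty$ is impossible because $\bar H_{\mu_i}$ decreases, hence no equilibrium exists unless $g_i(\bar w_i)\le0$, and then $\mu_i$ decreases to the boundary of $\eqset_{\mu_i}$.

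The main obstacle I anticipate is the bookkeeping around the output $y_{\mu_i} = \nabla g_i(w_i)\mu_i$: because $\nabla g_i$ is evaluated at the \emph{current} $w_i$ rather than at $\bar w_i$, the shifted output $\tilde y_{\mu_i}$ is not simply $\nabla g_i(\bar w_i)(\mu_i-\bar\mu_i)$, and one must carefully split $\tilde w_i^T \tilde y_{\mu_i} = (w_i-\bar w_i)^T(\nabla g_i(w_i)\mu_i - \nabla g_i(\bar w_i)\bar\mu_i)$ and match it term-by-term against the bound obtained from the projection inequality plus convexity of $g_i$. This is exactly where the convexity of $g$ (assumed in the problem setup) is essential, and getting the gradient inequality oriented correctly — evaluating at $w_i$ and comparing to $\bar w_i$ — is the delicate point. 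Since the result is cited verbatim from \cite{LHMNLC}, I would keep this argument brief and refer the reader there for the detailed computation.
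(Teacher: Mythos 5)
The paper gives no proof of this proposition --- it is quoted verbatim from \cite{LHMNLC} --- so there is no in-text argument to compare against; your proposal supplies the standard one, and it is essentially correct: shifted quadratic storage $\bar H_{\mu_i}=\tfrac12\tau_{\mu_i}(\mu_i-\bar\mu_i)^2$, the projection inequality $(\mu_i-\bar\mu_i)(g_i(w_i))^+_{\mu_i}\le(\mu_i-\bar\mu_i)g_i(w_i)$, then convexity plus the complementary-slackness relations \eqref{eq:omegamu}. One concrete correction to your displayed chain: the step $(\mu_i-\bar\mu_i)g_i(w_i)\le(\mu_i-\bar\mu_i)\nabla g_i(w_i)^T(w_i-\bar w_i)+\mu_i g_i(\bar w_i)-\bar\mu_i g_i(\bar w_i)$ amounts to multiplying the tangent bound $g_i(w_i)-\nabla g_i(w_i)^T(w_i-\bar w_i)\le g_i(\bar w_i)$ by $(\mu_i-\bar\mu_i)$, which is only legitimate when $\mu_i\ge\bar\mu_i$; and even where it holds it lands on $(\mu_i-\bar\mu_i)\nabla g_i(w_i)^T\tilde w_i$, which is not $\tilde w_i^T\tilde y_{\mu_i}$. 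The repair is exactly the term-by-term split you anticipate in your closing paragraph: bound $\mu_i g_i(w_i)\le\mu_i\left[g_i(\bar w_i)+\nabla g_i(w_i)^T\tilde w_i\right]$ using the tangent at $w_i$ and $\mu_i\ge0$, and separately $-\bar\mu_i g_i(w_i)\le-\bar\mu_i\left[g_i(\bar w_i)+\nabla g_i(\bar w_i)^T\tilde w_i\right]$ using the tangent at $\bar w_i$ and $\bar\mu_i\ge0$; summing and discarding $\mu_i g_i(\bar w_i)\le0$ while using $\bar\mu_i g_i(\bar w_i)=0$ gives exactly $\dot{\bar H}_{\mu_i}\le\tilde w_i^T\tilde y_{\mu_i}$ with each gradient evaluated at the correct point. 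So the flagged ``obstacle'' is real but fully resolvable by the route you describe.

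For the convergence claim with $w_i=\bar w_i$ fixed, your direct analysis of the scalar dynamics is the right one and is simpler than invoking LaSalle: since $(\bar\mu_i,\bar w_i)\in\eqset_{\mu_i}$ forces $g_i(\bar w_i)\le0$, the case $g_i(\bar w_i)>0$ you discuss cannot occur; if $g_i(\bar w_i)<0$ then $\mu_i$ decreases at constant rate until it reaches $0$ and the projection freezes it there, while if $g_i(\bar w_i)=0$ then $\mu_i$ is constant and every such point already lies in $\eqset_{\mu_i}$. Either way $(\mu_i,\bar w_i)\to\eqset_{\mu_i}$, as claimed.
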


Consider again system \eqref{eq:clphsys2}
\begin{equation}\label{eq:clphsysnodalio}
\begin{aligned}
\dot x&=
(J-R)\nabla H(x)+\nabla S(z)+G^Tu\\
y&=G\nabla H(x)=\begin{bmatrix}
P_g\\P_d
\end{bmatrix}, \quad G=\begin{bmatrix}
0&0&I&0&0&0\\
0&0&0&I&0&0
\end{bmatrix}
\end{aligned}
\end{equation}
with the state $x$ defined by \eqref{xz}, where we introduce an  additional input $u\in\mathbb R^{2n}$ and output $y\in\mathbb R^{2n}$.
\begin{myrem}\label{rem:passive}
Note that for any steady state $(\bar x,\bar u)$ of \eqref{eq:clphsysnodalio}, the  latter system is passive with respect to the shifted external port-variables $\tilde u:=u-\bar u, \ \tilde y=y-\bar y, \ \bar y:=G\nabla H(\bar x)$, using the storage function 
\begin{align}\label{eq:barH}
\bar H(x):=H(x)-(x-\bar x)^T\nabla H(\bar x)-H(\bar x).
\end{align}
\end{myrem}
We interconnect the subsystems \eqref{eq:xipassys2} to \eqref{eq:clphsysnodalio} in a power-preserving way by 
\begin{align*}
w_i=w=y \quad  \forall i\in\mathcal I, \qquad u=-\sum_{i\in\mathcal I}y_{\mu_i}
\end{align*}
to obtain the closed-loop system
\begin{subequations}\label{eq:nodalclsys}
	\begin{align}
	\dot \eta&=D^T\w\\
  M   \dot \w & =-D\Gamma(E_q') \Sin \eta -A\w+P_g-P_d \\
T_d'\dot E_q'     & =-F(\eta)E_q'+E_f\\
	\tau_g\dot P_g&=-\nabla C(P_g)+\lambda- \frac{\p g}{\p P_g}( P_g, P_d) \mu             -\w\\
	\tau_d\dot P_d&=\nabla U(P_d)-\lambda- \frac{\p g}{\p P_d}( P_g, P_d) \mu+\w\\
	\tau_v\dot v&=-\nabla C_T(v)- D^T\lambda\\
	\tau_\lambda\dot{\lambda}&=D v-P_g+P_d\\
	\tau_{\mu_i}\dot \mu_i&=(g_i(P_g,P_d))_{\mu_i}^+, \qquad i\in\mathcal I.
	\end{align}
\end{subequations}
Observe that the equilibrium set $\eqset_2$ of \eqref{eq:nodalclsys}, if expressed in the co-energy variables, is characterized by all $(\bar z,\bar \mu)$ that satisfy \eqref{eq:KKTcondnodal2} in addition to $\bar \w=0$, $-D\Gamma(\bar E_q') \Sin \bar \eta+\bar P_g-\bar P_d=0,-F(\bar \eta)\bar E_q'+E_f=0$, and therefore corresponds to the desired operation points.

Since both the subsystems \eqref{eq:xipassys2} and the system \eqref{eq:clphsys2} admit an \emph{incrementally passivity} property with respect to their steady states, the closed-loop system inherits the same property provided that an equilibrium of \eqref{eq:nodalclsys} exists.
\begin{mythm} \label{thm:ineqcontgrad} For every $(\bar z,\bar \mu)\in  \eqset_2$ \new{satisfying Assumption \ref{ass:sec1}} there exists a \alert{open }neighborhood $\new{\Upsilon}$ of $(\bar z,\bar \mu)$ where all trajectories $z$ satisfying \eqref{eq:nodalclsys} with initial conditions in $\new{\Upsilon}$ converge to the set $\eqset_2$ \new{and the convergence of each such trajectory is to a point.}
\end{mythm}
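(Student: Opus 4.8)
The plan is to mirror the proof of Theorem~\ref{thm:grad}, now regarding the closed-loop system \eqref{eq:nodalclsys} as the power-preserving interconnection of the incrementally passive system \eqref{eq:clphsysnodalio} (Remark~\ref{rem:passive}) with the passive constraint subsystems \eqref{eq:xipassys2} (Proposition~\ref{prop:musys}). Fix $(\bar z,\bar\mu)\in\eqset_2$ satisfying Assumption~\ref{ass:sec1}, write $\bar x=\tau\bar z$ and $\bar x_{\mu_i}=\tau_{\mu_i}\bar\mu_i$, and take as composite Lyapunov function
\begin{align*}
W(x,x_\mu):=\bar H(x)+\sum_{i\in\mathcal I}\bar H_{\mu_i}(x_{\mu_i}),
\end{align*}
where $\bar H$ is the shifted Hamiltonian \eqref{eq:barH} and $\bar H_{\mu_i}(x_{\mu_i})=\tfrac12(x_{\mu_i}-\bar x_{\mu_i})^T\tau_{\mu_i}^{-1}(x_{\mu_i}-\bar x_{\mu_i})$ is the shifted storage function of the $i$-th subsystem. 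Set $\bar u:=-\sum_{i\in\mathcal I}\nabla g_i(\bar w_i)\bar\mu_i$, which by the KKT conditions \eqref{eq:KKTcondnodal2} is a steady-state input of \eqref{eq:clphsysnodalio}.

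First I would differentiate $W$ along solutions of \eqref{eq:nodalclsys}. By Remark~\ref{rem:passive}, $\dot{\bar H}\le\tilde y^T\tilde u$ plus the dissipation remainder appearing in \eqref{eq:shdissineq}, and by Proposition~\ref{prop:musys}, $\dot{\bar H}_{\mu_i}\le\tilde w_i^T\tilde y_{\mu_i}$. Since the interconnection $w_i=w=y$, $u=-\sum_{i}y_{\mu_i}$ is power preserving, $\tilde y^T\tilde u+\sum_i\tilde w_i^T\tilde y_{\mu_i}=0$, so $\dot W$ collapses to the dissipation remainder, hence $\dot W\le0$, with equality only if $\w=0$, $\nabla_{E_q'}\bar H(x)=\nabla_{E_q'}H(x)=0$, $P_g=\bar P_g$ and $P_d=\bar P_d$, using strict concavity of $S$ in $(P_g,P_d)$. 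By Assumption~\ref{ass:sec1} one has $\nabla^2\bar H(x)=\nabla^2H(x)>0$ on a neighborhood of $\bar x$, and $\nabla^2\bar H_{\mu_i}=\tau_{\mu_i}^{-1}>0$, so $W$ has a strict local minimum at $(\bar x,\bar x_\mu)$; choose a compact sublevel set $\Upsilon$ of $W$ contained in that neighborhood, which is forward invariant since $\dot W\le0$.

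Next, applying LaSalle's invariance principle (in the form valid for the Carath\'eodory/projected dynamics of \eqref{eq:xipassys2}), every solution starting in $\Upsilon$ converges to the largest invariant set $\mathcal S$ contained in $\Upsilon\cap\{\w=0,\ \nabla_{E_q'}H(x)=0,\ P_g=\bar P_g,\ P_d=\bar P_d\}$. On $\mathcal S$: $\dot\eta=D^T\w=0$ so $\eta$ is constant; $\nabla_{E_q'}H=0$ gives $E_q'=F(\eta)^{-1}E_f$ constant and $\dot\w=0$ gives $-D\Gamma(E_q')\Sin\eta+\bar P_g-\bar P_d=0$; boundedness of $\Upsilon$ together with the convergence statement of Proposition~\ref{prop:musys} forces each $\mu_i$ to be constant on $\mathcal S$ with $g_i(\bar P_g,\bar P_d)\le0$, $\mu_i\ge0$, $\mu_i g_i(\bar P_g,\bar P_d)=0$; then $\dot P_g=0$, $\dot P_d=0$ identify $\lambda$ as a constant, and $\dot\lambda=0$, $\dot v=0$ yield the remaining stationarity and feasibility relations. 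Collecting these, $\mathcal S$ is characterized by \eqref{eq:KKTcondnodal2} together with the physical equilibrium conditions, so $\mathcal S\subset\eqset_2$ and the solution converges to $\eqset_2$. Convergence of each trajectory to a single point then follows verbatim as in Theorem~\ref{thm:grad}: if the (nonempty, bounded) omega-limit set contained two distinct points, both necessarily in $\eqset_2$, the local positive definiteness of the Hessians of the two associated composite functions would yield disjoint compact sublevel sets that, by the dissipation inequality, the trajectory must eventually enter simultaneously, a contradiction.

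The step I expect to be the main obstacle is the treatment of the nonsmooth projection in \eqref{eq:xipassys2}: one must justify existence and regularity of solutions, the applicability of LaSalle's principle in this Carath\'eodory setting, and --- most delicately --- that the complementary slackness relations genuinely emerge on the largest invariant set rather than merely being consistent with it, which is precisely where the final claim of Proposition~\ref{prop:musys} and the boundedness of $\Upsilon$ do the real work. A secondary subtlety, absent in Theorem~\ref{thm:grad}, is that $\lambda=\bar\lambda$ no longer follows immediately from $\dot P_g=0$ because of the coupling through $\tfrac{\partial g}{\partial P_g}\mu$, so the identification of $\mathcal S$ with $\eqset_2$ has to be assembled from the full invariance analysis rather than read off directly.
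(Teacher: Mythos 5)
Your proposal is correct and follows essentially the same route as the paper: the same composite shifted storage function $\bar H(x)+\sum_i\bar H_{\mu_i}(x_{\mu_i})$, the same passivity/power-preserving interconnection argument yielding $\dot W\le 0$ with the same equality conditions, the invariance principle for discontinuous Carath\'eodory systems, the second statement of Proposition~\ref{prop:musys} to pin down $\mu$ on the invariant set, and the singleton omega-limit argument carried over from Theorem~\ref{thm:grad}. Your closing remarks on the nonsmooth projection and on $\lambda$ not being immediately identified are reasonable added care but do not change the argument.
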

\begin{proof} Let $ (\bar z,\bar \mu)\in\eqset_2 $ and consider the shifted Hamiltonian $\bar H_e$ around $ (\bar x,\bar x_\mu)=(\tau \bar z,\tau_\mu \bar \mu) $ defined by 
\begin{align*}
\bar H_e(x,x_\mu)&:=\bar H(x)+\sum_{i\in\mathcal I}\bar H_{\mu_i}(x_{\mu_i})=\bar H(x)+\frac{1}{2}\tilde x_\mu^T\tau_{x_\mu}^{-1}\tilde x_\mu
\end{align*}
where $\tilde x_\mu:=x_\mu-\bar x_\mu$ and $\bar H$ is defined by \eqref{eq:barH}. By Proposition \ref{prop:musys} and Remark \ref{rem:passive}, the time-derivative of $\bar H_e$ satisfies
\begin{align*}
\dot {\bar H}_e\leq \tilde u^T\tilde y+\tilde w^T\sum_{i\in\mathcal I}\tilde y_{\mu_i}=\tilde u^T\tilde y-\tilde u^T\tilde y=0
\end{align*}
where equality holds only if $P_g=\bar P_g, P_d=\bar P_d, \w =0, \nabla_{E_q'} H(x)=0$. On the largest invariant set where $\dot {\bar {H}}_e=0$ it follows by the second statement of Proposition \ref{prop:musys} that $\mu=\bar \mu$. As a result, $\lambda=\bar \lambda$ and $v,\eta,E_q'$ are constant on this invariant set. Since the right-hand side of \eqref{eq:xipassys2} is discontinuous and takes the same form as in \cite{cherukuri2016asymptoticcaratheodory}, we can apply the invariance principle for discontinuous Caratheodory systems \blue{\cite[Proposition 2.1]{cherukuri2016asymptoticcaratheodory}} 
to conclude that $(z,\mu)\to \eqset_2$ as $t\to\infty$. \new{By following the same line of arguments as in the proof of Theorem \ref{thm:grad}, convergence of each trajectory to a point is proven.}

\end{proof}
\blue{\begin{myrem}
Theorem \ref{thm:ineqcontgrad} uses the Caratheodory variant of the Invariance Principle which requires that the Caratheodory solution 
of \eqref{eq:nodalclsys} is unique and that its omega-limit set is invariant \cite{cherukuri2016asymptoticcaratheodory}. 
\new{These requirements} are indeed satisfied 
 by extending Lemmas 4.1-4.4 of \cite{cherukuri2016asymptoticcaratheodory} to the case where equality constraints and \emph{nonstrict} convex/concave (utility) functions are considered in the optimization problem \cite[equation (3)]{cherukuri2016asymptoticcaratheodory}, \new{noting that these lemmas only require convexity/concavity  instead of their strict versions}. In particular, by adding a quadratic \new{function of the} Lagrange multipliers associated with  the equality constraints 
to the Lyapunov function, it can be proven that monotonicity of the primal-dual dynamics with respect to primal-dual optimizers as stated in \cite[Lemma 4.1]{cherukuri2016asymptoticcaratheodory}  holds for this more general case as well, see also \cite{LHMNLC}, \cite{cherukuri2015saddle}. 


\end{myrem}}
\begin{myrem}
  Instead of using the hybrid dynamics \eqref{eq:xipassys2} for dealing with the inequality constraints \eqref{eq:gineq}, we can instead introduce the so called \emph{barrier functions} $B_i=-\blue{\nu}\log(-g_i(P_g,P_d))$ that are added to the objective function \cite{convexopt}. Simultaneously,  the corresponding inequalities in the social welfare problem \eqref{eq:minprobnodcong2} are removed to obtain the modified convex optimization problem 
  \begin{equation}\label{eq:intpoint}
\begin{aligned}
\min_{P_g,P_d,v} \ \  & -S(P_g,P_d)-\nu\sum_{i\in\V}\log(-g_i(P_g,P_d))\\
\text{s.t.}\ \  & D_\comm v-P_g+P_d=0. 
\end{aligned}
\end{equation}
 Here $\nu>0$ is called the \emph{barrier parameter} and is usually chosen small. By applying the primal-dual gradient method to \eqref{eq:intpoint} it can be shown that, if the system is initialized in the interior of the feasible region, i.e. where \eqref{eq:gineq} holds, then the trajectories of the resulting gradient dynamics remain within the feasible region and the system converges to a suboptimal value of the social welfare \cite{arrow_gradmethod}, \cite{convexopt}, \cite{wang2011control}. However, if Slater's condition holds,  this suboptimal value which depends on $\blue{\nu}$ converges to the optimal value of the social welfare problem as $\blue{\nu}\to 0$  \cite{convexopt}. The particular advantage of using barrier functions is to avoid the use of an hybrid controller and to enforce that the trajectories remain within the feasible region  for all future time. 
\end{myrem}

\subsection{Including line congestion and transmission costs} \label{sec:linecong}
The previous section shows how to include nodal power constraints into the social welfare problem. In case the network is acyclic, line congestion and power transmission costs can be incorporated into the optimization problem as well. 


To this end, define the (modified) social welfare by $U(P_d)-C(P_g)-C_T(v)$ where the convex function $C_T(v)$ corresponds to the power transmission cost. If security constraints on the transmission lines are included as well, the optimization problem \eqref{eq:minprobbasic} modifies to 
\begin{subequations}\label{eq:minprob}
\begin{align}
\min_{P_g,P_d,v} \ \  & -S(P_g,P_d,v):=C(P_g)+C_T(v)-U(P_d)\\
\text{s.t.}\ \  & D v-P_g+P_d=0\\
 &-\kappa \preceq v\preceq \kappa, \label{eq:addinconst}
\end{align}
\end{subequations}
where $\kappa \in\mathbb R^m$ satisfies the element-wise inequality $\kappa \succ 0$.  Note that in this case the communication graph is chosen to be identical with the topology of the physical network, i.e., $D_c=D$. As a result,  the additional constraints \eqref{eq:addinconst} bound the (virtual) power flow along the transmission lines as $|v_k|\leq \kappa_k, k\in\E$.  The corresponding Lagrangian is given by 
\begin{align*}
\mathcal L &= C(P_g)+ C_T(v)-U(P_d)+\lambda^T(D v-P_g+P_d)\\
&+\mu_+^T(v-\kappa)+\mu_-^T(-\kappa-v)
\end{align*}
with Lagrange multipliers $\lambda \in\mathbb R^n, \ \mu_+,\ \mu_+\in\mathbb R_{\geq0}^{m}$. The resulting KKT optimality conditions   are given by  
\begin{equation}\label{eq:KKTcond2}
\begin{aligned}
	\nabla C(\bar P_g)-\bar \lambda            =0,  \qquad   
	-\nabla U(\bar P_d)+\bar \lambda                                                                 & =0, \\
	\nabla C_T(\bar v)+D^T\bar \lambda+\bar \mu_+-\bar \mu_-                                                                                                    & =0, \\
	-\kappa \preceq \bar v                        \preceq \kappa, \qquad 	D \bar v-\bar P_g+\bar P_d                                                                  & =0, \\
	\bar \mu_+,\bar \mu_-                      \succeq 0, \quad      	\bar \mu_+^T(\bar v-\kappa)                =0,  \quad         
	\bar \mu_-^T(-\kappa-\bar v) & =0.
\end{aligned}
\end{equation}
Suppose that Slater's condition holds. Then, since the optimization problem \eqref{eq:minprob} is convex, it  follows that $(\bar P_g,\bar P_d,\bar v)$ is an optimal solution to \eqref{eq:minprob} if and only if there exists $\bar \lambda \in\mathbb R^n, \ \bar \mu=\col(\bar \mu_+,\ \bar \mu_-)\in\mathbb R_{\geq0}^{2m}$ satisfying  \eqref{eq:KKTcond2} \cite{convexopt}.  

By applying the gradient method to \eqref{eq:minprob} in a similar manner as before and 
connecting the resulting controller with the physical model \eqref{eq:3SGcomp},  we obtain the following closed-loop system:
\begin{subequations}\label{eq:secconstraint}
	\begin{align}
\dot \eta&=D^T\w\label{eq:partph1}\\
  M   \dot \w & =-D\Gamma(E_q') \Sin \eta -A\w+P_g-P_d \\
T_d'\dot E_q'     & =-F(\eta)E_q'+E_f\\
\tau_g\dot P_g&=-\nabla C(P_g)+\lambda-\w\\
\tau_d\dot P_d&=\nabla U(P_d)-\lambda+\w\\
\tau_v\dot v&=-\nabla C_T(v)- D^T\lambda-\mu_++\mu_-\\
\tau_\lambda\dot{\lambda}&=D v-P_g+P_d\label{eq:partphl}\\
\tau_+\dot \mu_+&=(v-\kappa)^+_{\mu_+}\label{eq:mup}\\
\tau_-\dot \mu_-&=(-\kappa-v)^+_{\mu_-}.\label{eq:mum}
\end{align}
\end{subequations}
The latter system  can partially be  put into a \pH\ form, since equations \eqref{eq:partph1}-\eqref{eq:partphl} can be rewritten as
\begin{equation}\label{eq:clphsys}
\begin{aligned}
\dot x&=
(J-R)\nabla H(x)+\nabla S(z)+N\mu\\
N&=
\begin{bmatrix}
  0&0&0&0&-I&0\\
  0&0&0&0&I&0
\end{bmatrix}^T,
\end{aligned}
\end{equation}
where the variables $x,z$ and the Hamiltonian $H$ are respectively defined by \eqref{xz} and \eqref{eq:clphsys2} as before, and $\mu=\col(\mu_+,\mu_-)$.

Since the network topology is a tree (i.e. $\ker D=\{0\}$),  the equilibrium of \eqref{eq:secconstraint} satisfies $\bar v=\Gamma(\bar E_q')\Sin \bar \eta$. Hence, the controller variable $v$ corresponds to the \emph{physical} power flow of the network if the closed-loop system is at steady state. Consequently, the constraints and costs on $v$ correspond to constraints and costs of the physical power flow if the system converges to an equilibrium. 


\begin{mythm}\label{thm:stab} Let the network  topology be acyclic and let $(\bar z,\bar \mu)$ be an (isolated) equilibrium of \eqref{eq:secconstraint} \new{satisfying Assumption \ref{ass:sec1}}. Then all trajectories $(z,\mu)$ of \eqref{eq:secconstraint}  initialized in a sufficiently small \alert{open }neighborhood around $(\bar z,\bar \mu)$ converge asymptotically to $(\bar z,\bar \mu)$
\end{mythm}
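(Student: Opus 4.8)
The plan is to run the same Lyapunov/LaSalle argument as in the proofs of Theorems \ref{thm:grad} and \ref{thm:ineqcontgrad}, now applied to the partially port-Hamiltonian representation \eqref{eq:clphsys} of \eqref{eq:secconstraint} together with the two hybrid subsystems \eqref{eq:mup}, \eqref{eq:mum}. First I would observe that \eqref{eq:mup} and \eqref{eq:mum} are instances of the subsystem \eqref{eq:xipassys2} with the affine (hence convex) constraint functions $g_+(v)=v-\kappa$ and $g_-(v)=-\kappa-v$ and input $w=v$; by Proposition \ref{prop:musys} each of them is incrementally passive with respect to its steady states, and the interconnection $w=v$, $u=-\sum_i y_{\mu_i}$ (which here feeds $-\mu_++\mu_-$ into the $v$-dynamics) is power-preserving. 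Thus \eqref{eq:secconstraint} is the power-preserving interconnection of the incrementally passive system \eqref{eq:clphsysnodalio} (Remark \ref{rem:passive}) with these two passive blocks, exactly in the pattern of Section \ref{sec:nodcong}.

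Next I would take as Lyapunov candidate the shifted Hamiltonian $\bar H_e(x,x_\mu)=\bar H(x)+\frac12\tilde x_\mu^T\tau_{x_\mu}^{-1}\tilde x_\mu$, with $\bar H$ as in \eqref{eq:barH}, $x_\mu=\col(x_{\mu_+},x_{\mu_-})$ and $\tilde x_\mu=x_\mu-\bar x_\mu$. Writing \eqref{eq:clphsys} in shifted port-Hamiltonian form around $(\bar x,\bar\mu)$ and using the passivity estimates of Proposition \ref{prop:musys}, the cross terms cancel and one obtains, just as in the proof of Theorem \ref{thm:ineqcontgrad},
\[
\dot{\bar H}_e \le -\w^T A\w - (\nabla_{E_q'}\bar H)^T R_q \nabla_{E_q'}\bar H + (z-\bar z)^T(\nabla S(z)-\nabla S(\bar z)) \le 0,
\]
the last inequality by concavity of $S$ (note $C_T$ need only be convex, not strictly), with equality forcing $\w=0$, $\nabla_{E_q'}H(x)=0$ and, by strict convexity/concavity of $C$ and $U$, $P_g=\bar P_g$, $P_d=\bar P_d$. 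Since Assumption \ref{ass:sec1} gives $\nabla^2H(\bar x)>0$ and the $x_\mu$-block of $\nabla^2\bar H_e$ is the positive definite $\tau_{x_\mu}^{-1}$, $\bar H_e$ has a strict local minimum at $(\bar x,\bar x_\mu)$; hence $(\bar z,\bar\mu)$ is Lyapunov stable and there is a compact forward invariant sublevel set $\Upsilon$ of $\bar H_e$ around $(\bar z,\bar\mu)$ on which $\nabla^2H>0$.

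For attractivity I would invoke the invariance principle for discontinuous Caratheodory systems \cite[Proposition 2.1]{cherukuri2016asymptoticcaratheodory}, applicable for the same reasons as in the remark following Theorem \ref{thm:ineqcontgrad}. It yields convergence of every trajectory starting in $\Upsilon$ to the largest invariant set $\mathcal S\subseteq \Upsilon\cap\{\w=0,\ \nabla_{E_q'}H=0,\ P_g=\bar P_g,\ P_d=\bar P_d\}$. On $\mathcal S$, $\dot P_g=0$ gives $\lambda=\nabla C(\bar P_g)=\bar\lambda$, hence $\dot\lambda=0$ and $D(v-\bar v)=0$; this is the only place acyclicity enters, since for a connected acyclic graph $\ker D=\{0\}$, so $v=\bar v$ is constant, whence $\eta,E_q'$ are constant and, by the second statement of Proposition \ref{prop:musys}, $(\mu_+,\mu_-)$ reach their equilibrium sets. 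Thus $\mathcal S$ consists entirely of equilibria of \eqref{eq:secconstraint}; since $(\bar z,\bar\mu)$ is isolated, shrinking $\Upsilon$ makes it the only equilibrium there, so $\mathcal S=\{(\bar z,\bar\mu)\}$ and the asserted asymptotic convergence follows.

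The main obstacle, as in Theorem \ref{thm:ineqcontgrad}, is the rigorous treatment of the discontinuous right-hand sides \eqref{eq:mup}--\eqref{eq:mum}: one must guarantee uniqueness of the Caratheodory solutions and invariance of their omega-limit sets so that the LaSalle-type conclusion is legitimate, and check that the transmission cost $C_T$ (possibly non-strict) and the equality constraint $Dv-P_g+P_d=0$ do not spoil the monotonicity/Lyapunov estimates adapted from \cite{cherukuri2016asymptoticcaratheodory}. A secondary but essential point is the acyclicity step $\ker D=\{0\}$: without it one could only conclude $Dv=D\bar v$ on $\mathcal S$, and the reduction to a single point (and indeed the interpretation of $v$ as the physical power flow) would fail.
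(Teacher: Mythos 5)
Your proposal is correct and follows essentially the same route as the paper's proof: the same Lyapunov candidate $\bar H(x)+\tfrac12\tilde\mu_+^T\tau_{\mu_+}\tilde\mu_+ + \tfrac12\tilde\mu_-^T\tau_{\mu_-}\tilde\mu_-$, the same passivity inequalities for the projected $\mu_\pm$-dynamics (which the paper writes out directly rather than citing Proposition \ref{prop:musys}), the same invariance principle for discontinuous Caratheodory systems, and the same use of $\ker D=\{0\}$ to pin down $v=\bar v$ on the invariant set. Your accounting of where acyclicity and isolatedness enter matches the paper; no gap.
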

\begin{proof} 
Let $(\bar z,\bar \mu)$ be the equilibrium of \eqref{eq:secconstraint}. By defining the shifted Hamiltonian $\bar  H(x)$ around $ \bar x:=\tau\bar z $ by 
\begin{align*}
\bar H(x)=H(x)-( x-\bar x)^T\nabla H(\bar x)- H(\bar x)
\end{align*}
one can rewrite \eqref{eq:clphsys} as
\begin{equation}\label{eq:clphsysshifted}
\begin{aligned}
\dot x&=
(J-R)\nabla \bar H(x)+\nabla S(z)-\nabla S(\bar z)+N\tilde \mu
\end{aligned}
\end{equation}
where $\tilde \mu:=\mu-\bar \mu$.	Consider candidate Lyapunov function 
\begin{align*}
V(x,\mu)&=\bar H(x)+\frac{1}{2}\tilde \mu_+\tau_{\mu_+}\tilde \mu_++\frac{1}{2}\tilde \mu_-^T\tau_{\mu_-}\tilde\mu_-
\end{align*}
and   observe that 
\begin{align*}
\tilde \mu_+^T(v-\kappa)^+_{\mu_+}&\leq \tilde \mu_+^T(v-\kappa)= \tilde \mu_+^T(\bar v-\kappa+\tilde v)\leq \tilde \mu_+^T\tilde v\\
\tilde \mu_-^T(-\kappa-v)^+_{\mu_-}&\leq \tilde \mu_-^T(-\kappa-v)\\
&= \tilde \mu_-^T(-\kappa-\bar v-\tilde v)\leq -\tilde \mu_-^T\tilde v.
\end{align*}
Bearing in mind \eqref{eq:clphsysshifted}, the time-derivative of $V$  amounts to
\begin{align*}
&\dot V=-\w^T A\w-(\nabla_{E_q'}H(x))^TR_q\nabla_{E_q'}H(x)\\
&+(z-\bar z)^T(\nabla S(z)-\nabla S(\bar z))\\
&-\tilde v^T\tilde \mu_++\tilde v^T\tilde \mu_- +\tilde \mu_+^T(v-\kappa)^+_{\mu_+} +\tilde \mu_-^T(-\kappa-v)^+_{\mu_-}\\
&\leq-\w^T A\w+(z-\bar z)^T(\nabla S(z)-\nabla S(\bar z))\\
&-(\nabla_{E_q'}H(x))^TR_q\nabla_{E_q'}H(x)\leq 0
\end{align*}
where equality holds only if $\nabla_{E_q'}H(x)=0,\w=0,P_g=\bar P_g,P_d=\bar P_d$. On the largest invariant set $\mathcal S$ where $\nabla_{E_q'}H(x)=0,\w=0,P_g=\bar P_g,P_d=\bar P_d$ it follows that, since the graph contains no cycles $\lambda=\bar \lambda,v=\bar v, \mu=\bar \mu$ and that $\eta,E_q'$ are constant, which corresponds to an equilibrium.  In particular $\nabla V(x,\mu)=0$ for all $(z,\mu)\in \mathcal S$ and $(\bar z,\bar \mu)\in\mathcal S$.
Since by Assumption \ref{ass:sec1} we have $\nabla^2 V(\bar x,\bar \mu)>0$, it follows that $(\bar z,\bar \mu)$ is isolated. By the invariance principle for discontinuous Caratheodory systems \cite{cherukuri2016asymptoticcaratheodory} all trajectories $(z,\mu)$ of \eqref{eq:secconstraint} initializing in a sufficienly small neighborhood around $(\bar z,\bar \mu)$ satisfy $\mu\to \bar \mu$, $z\to \bar z$ as $t\to\infty$.

\end{proof}


\begin{myrem}
  It \alert{should be noted that it }is possible to include nodal power constraints, line congestion and transmission costs simultaneously. However, as the results in this section are only valid for acyclic graphs, it should also be assumed for the more general case that the physical \blue{network} is a tree.  
\end{myrem}

\subsection{State transformation}
\label{sec:impl}
Consider again the minimization problem \eqref{eq:minprobbasic}. As shown before, by applying the gradient method to the social welfare problem, the closed-loop system \eqref{eq:clphsys2} is obtained. 

Note that 
in the $\lambda$-dynamics the demand $P_d$ appears, which in practice is often uncertain. A possibility to eliminate the demand from the controller dynamics is by a state transformation \cite{AGC_ACC2014,you2014reverse}. To this end, define the new variables 
\begin{align*}
\hat x:=\begin{bmatrix}
\eta\\ p\\E_q'\\  x_g\\x_d\\x_v\\x_\theta
\end{bmatrix}=
\begin{bmatrix}
I&0&0&0&0&0&0\\
0&I&0&0&0&0&0\\
0&0&I&0&0&0&0\\
0&0&0&I&0&0&0\\
0&0&0&0&I&0&0\\
0&0&0&0&0&I&0\\
0&I&0&0&0&0&I
\end{bmatrix}x=\hat \tau \begin{bmatrix}
\eta\\ p\\ E_q'\\ P_g\\P_d\\v\\\theta
\end{bmatrix}=:\hat \tau \hat z,
\end{align*}
i.e., $x_\theta:=\tau_\theta \theta=p+x_\lambda$. Then the \pH\  system \eqref{eq:clphsys2} transforms to 
\begin{equation}\label{eq:clphsysalt}
\begin{aligned}
\dot {\hat x}&=\begin{bmatrix}
	0  & D^T & 0 & 0 & 0  & 0       & D^T        \\
	-D & -A  & 0 & I & -I & 0       & -A         \\
	0  & 0   & -R_q & 0 & 0  & 0       & 0          \\
	0  & -I  & 0 & 0 & 0  & 0       & 0          \\
	0  & I   & 0 & 0 & 0  & 0       & 0          \\
	0  & 0   & 0 & 0 & 0  & 0       & -D_\comm^T \\
	-D & -A  & 0 & 0 & 0  & D_\comm & -A
\end{bmatrix}\nabla \hat H(\hat x)\\
&+\nabla S(\hat z)
\end{aligned}
\end{equation}
with Hamiltonian
\begin{align*}
\hat H(\hat x)&=H_p+\frac{1}{2}x_g^T\tau_g^{-1}x_g+\frac{1}{2}x_d^T\tau_d^{-1}x_d\\
&+\frac{1}{2}x_v^T\tau_v^{-1}x_v+\frac{1}{2}(x_\theta-p)\tau_\lambda^{-1}(x_\theta-p).
\end{align*}
By writing the system of differential equations \eqref{eq:clphsysalt} explicitly we obtain
\begin{equation}\label{eq:secconstraintalt}
\begin{aligned}
\dot \eta&=D^T\w\\
  M   \dot \w & =-D\Gamma(E_q') \Sin \eta -A\w+P_g-P_d \\
T_d'\dot E_q'     & =-F(\eta)E_q'+E_f\\
\tau_g\dot P_g&=-\nabla C(P_g)+\tau_\lambda^{-1}(\tau_\theta \theta-M\w)-\w\\
\tau_d\dot P_d&=\nabla U(P_d)-\tau_\lambda^{-1}(\tau_\theta \theta-M\w)+\w\\
\tau_v\dot v&=- D_\comm^T\tau_\lambda^{-1}(\tau_\theta \theta-M\w)\\
\tau_\theta\dot{\theta}&=D_\comm v-D\Gamma \Sin \eta-A\w.
\end{aligned}
\end{equation}
Define $\eqset_4$ as  the set of all $\hat z^*:=(\bar \eta,\bar \w, \bar P_g,\bar P_d, \bar v, \bar \theta) $ that are an equilibrium of \eqref{eq:secconstraintalt}.
Using the previous established tools we can prove asymptotic stability to the set of optimal points $ \eqset_4 $. 
\begin{mythm}\label{thm:2} For every $\hat z^*\in \eqset_4$ \new{satisfying Assumption \ref{ass:sec1}} there exists a \alert{open }neighborhood $\new{\Upsilon}$ around $\hat z^*$ where all trajectories $\hat z$ satisfying \eqref{eq:secconstraintalt}  (or equivalently \eqref{eq:clphsysalt}) and initializing in $\new{\Upsilon}$ converge to  $\eqset_4$. \new{In addition, the convergence of each such trajectory is to a point.}
\end{mythm}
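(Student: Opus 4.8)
The plan is to mirror the proof of Theorem~\ref{thm:grad}, working with the shifted version of the transformed Hamiltonian $\hat H$. First I would check that the system matrix in \eqref{eq:clphsysalt} splits as $\hat J-\hat R$ with $\hat J=-\hat J^T$ and $\hat R=\hat R^T\succeq 0$: the only symmetric contributions are the block $-R_q$ on the $E_q'$-row and the blocks $-A$ on the $p$- and $x_\theta$-rows together with the $-A$ coupling those two rows, and the resulting $2\times 2$ operator block carrying $A$ in each of its four entries is positive semidefinite since $(u+v)^T A(u+v)\ge 0$. Thus \eqref{eq:clphsysalt} reads $\dot{\hat x}=(\hat J-\hat R)\nabla\hat H(\hat x)+\nabla S(\hat z)$; subtracting the equilibrium identity $(\hat J-\hat R)\nabla\hat H(\bar{\hat x})+\nabla S(\hat z^*)=0$ (with $\bar{\hat x}:=\hat\tau\hat z^*$) turns it into $\dot{\hat x}=(\hat J-\hat R)\nabla\bar{\hat H}(\hat x)+\nabla S(\hat z)-\nabla S(\hat z^*)$, where $\bar{\hat H}(\hat x):=\hat H(\hat x)-(\hat x-\bar{\hat x})^T\nabla\hat H(\bar{\hat x})-\hat H(\bar{\hat x})$.

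Second, differentiating $\bar{\hat H}$ along solutions gives $\dot{\bar{\hat H}}=-\nabla\bar{\hat H}^T\hat R\,\nabla\bar{\hat H}+(\hat z-\hat z^*)^T(\nabla S(\hat z)-\nabla S(\hat z^*))$. Since $S$ depends only on $P_g,P_d$ (unaffected by the state transformation) and $\nabla\bar{\hat H}$ restricted to those two coordinates equals $P_g-\bar P_g$ and $P_d-\bar P_d$, strict convexity of $C$ and strict concavity of $U$ make the second term nonpositive, vanishing iff $P_g=\bar P_g$ and $P_d=\bar P_d$. Moreover a short computation gives $\nabla_p\bar{\hat H}+\nabla_{x_\theta}\bar{\hat H}=\w$, so the first term equals $-\w^T A\w-(\nabla_{E_q'}\bar{\hat H})^T R_q\nabla_{E_q'}\bar{\hat H}$, vanishing iff $\w=0$ and $\nabla_{E_q'}\hat H(\hat x)=0$. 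Hence $\dot{\bar{\hat H}}\le 0$, with equality precisely on $\{\w=0,\ P_g=\bar P_g,\ P_d=\bar P_d,\ \nabla_{E_q'}\hat H=0\}$. Positive definiteness of $\nabla^2\bar{\hat H}=\nabla^2\hat H$ near $\bar{\hat x}$ follows from Assumption~\ref{ass:sec1}: relative to $\nabla^2 H_p\oplus\diag(\tau_g^{-1},\tau_d^{-1},\tau_v^{-1},0)$ the term $\tfrac12(x_\theta-p)^T\tau_\lambda^{-1}(x_\theta-p)$ adds only a rank-deficient positive semidefinite block in the $(p,x_\theta)$-slot, and a Schur-complement argument (using that $H_p$ contributes $M^{-1}\succ 0$ in the $p$-slot) shows the sum is positive definite. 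Consequently a compact sublevel set $\Upsilon$ of $\bar{\hat H}$ around $\hat z^*$ is forward invariant, and LaSalle's invariance principle applies.

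Third, on the largest invariant subset of $\Upsilon\cap\{\w=0,\ P_g=\bar P_g,\ P_d=\bar P_d,\ \nabla_{E_q'}\hat H=0\}$ I would read off from \eqref{eq:secconstraintalt}, along the lines of the proof of Theorem~\ref{thm:grad}, that $\eta$ and $E_q'$ are constant (the latter because $\nabla_{E_q'}H_p=0$ forces $F(\eta)E_q'=E_f$); that $\dot P_g=0$ forces $\lambda:=\tau_\lambda^{-1}(\tau_\theta\theta-M\w)=\nabla C(\bar P_g)=\nabla U(\bar P_d)=:\bar\lambda$, so $\lambda$ and hence $\theta$ are constant; that $\dot v=-D_\comm^T\bar\lambda$ while $D_\comm v$ must stay constant (from $\dot\theta=D_\comm v-D\Gamma(E_q')\Sin\eta-A\w$ with $\theta,\eta,E_q',\w$ constant) forces $D_\comm^T\bar\lambda=0$ and $v$ constant; and that $\dot\theta=0$ and $\dot\w=0$ give $D_\comm v-\bar P_g+\bar P_d=0$ and $D\Gamma(\bar E_q')\Sin\bar\eta=\bar P_g-\bar P_d$. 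These are exactly the relations defining $\eqset_4$, so every solution starting in $\Upsilon$ converges to $\eqset_4$. Convergence of each such solution to a point is then obtained verbatim from the $\omega$-limit-set argument in the proof of Theorem~\ref{thm:grad}: $\Omega(\hat x)$ is nonempty by Bolzano--Weierstrass, and two distinct points in it would yield disjoint compact invariant sublevel sets that the trajectory would eventually have to occupy simultaneously, a contradiction.

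The step I expect to be the main obstacle is the bookkeeping in this LaSalle analysis, because in the transformed coordinates $\lambda$ is no longer a state but the combination $\tau_\lambda^{-1}(\tau_\theta\theta-M\w)$, so recovering the dual feasibility relation $D_\comm^T\bar\lambda=0$ requires the extra observation that $D_\comm v$ stays constant on the invariant set. A shorter route that sidesteps steps~1--3 altogether is to observe that the state transformation of Section~\ref{sec:impl} is $\hat x=Tx$ for a fixed invertible (unit-diagonal, lower-triangular) matrix $T$, that $T$ maps the closed-loop system \eqref{eq:clphsys2} onto \eqref{eq:clphsysalt} and the equilibrium set $\eqset_1$ onto $\eqset_4$, and that $\hat H(\hat x)=H(x)$ in these coordinates since $x_\theta-p=x_\lambda$; Theorem~\ref{thm:grad} then applies directly, with the neighborhood and the point-convergence preserved under the homeomorphism $T$.
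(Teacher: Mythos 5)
Your proposal is correct, and its closing ``shorter route'' is precisely the paper's own proof, which consists of the single observation that the invertible linear change of coordinates $\hat x = Tx$ (with $x_\theta = p + x_\lambda$, so $\hat H(\hat x)=H(x)$) carries \eqref{eq:clphsys2} onto \eqref{eq:clphsysalt} and $\eqset_1$ onto $\eqset_4$, so the conclusion of Theorem \ref{thm:grad} transfers verbatim. The detailed LaSalle computation in your first three steps (the $\hat J-\hat R$ splitting with the rank-deficient $A$-block, the identity $\nabla_p\bar{\hat H}+\nabla_{x_\theta}\bar{\hat H}=\w$, and the recovery of $D_\comm^T\bar\lambda=0$ from constancy of $D_\comm v$ on the invariant set) is a sound expansion of what the paper leaves implicit, but is not needed once the coordinate-change argument is in place.
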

\begin{proof}
  We proceed along the same lines as \blue{in} the proof of Theorem \ref{thm:grad}. Since the stability result of Theorem \ref{thm:grad} is preserved after a state transformation, the proof is concluded.
\end{proof}
Note that the latter result holds for all $\tau_g,\tau_d,\tau_v,\tau_\lambda,\tau_\theta>0$. The controller appearing in \eqref{eq:secconstraintalt} can be simplified by choosing $\tau_\lambda=\tau_\theta=M$. As a result, the controller dynamics is described by
\begin{subequations}
\label{eq:secconstraintalt2}
    \begin{align}
      \tau_g\dot P_g&=-\nabla C(P_g)+\theta-2\w\\
      \tau_d\dot P_d&=\nabla U(P_d)-\theta+2\w\\
      \tau_v\dot v&=- D_\comm^T(\theta-\w) \label{eq:vprice}\\
      M\dot{\theta}&=D_\comm v-D\Gamma(E_q') \Sin \eta-A\w. \label{eq:tprice}
    \end{align}
\end{subequations}
The main advantage of controller design \eqref{eq:secconstraintalt2} is that no information about the power supply and demand is required in the dynamic pricing algorithm \eqref{eq:vprice},~\eqref{eq:tprice}, where we observe that the quantity $\theta-2\w$ acts here as the electricity price for the producers and consumers. Another benefit of the proposed dynamic pricing algorithm is that, \alert{in }contrary to \cite{zhao2015distributedAC}, no information is required \blue{about} $\dot \w$. 

On the other hand,  knowledge about the physical power flows and  the power system parameters $M, A$ is required. 
Determining the radius of uncertainty of these parameters under which asymptotic stability is preserved remains an open question \cite{AGC_ACC2014}\blue{; see \cite{mallada2014optimal} for results} in a similar setting  where only the damping term $A$ is assumed to be uncertain.

\subsection{Relaxing the strict convexity assumption}
\label{sec:relax-strict-conv}
By making a minor modification to the social welfare problem \eqref{eq:minprobbasic}, it is possible to relax the condition that the functions $C,U$ are \emph{strictly} convex and concave respectively. To this end, consider the optimization problem 
\begin{subequations}\label{eq:minprobbasicaug}
\begin{align}
\min_{P_g,P_d,v} \ \  & C(P_g)-U(P_d)+\frac12\rho||D_\comm v-P_g+P_d||^2\label{eq:modS}\\
\text{s.t.}\ \  & D_\comm v-P_g+P_d=0,
\end{align}
\end{subequations}
where $\rho>0$, $C(P_g)$ is convex and $U(P_d)$ is concave, which makes the optimization problem  \eqref{eq:minprobbasicaug} convex. Suppose that there exists a feasible solution to the minimization problem, then the set of optimal points of \eqref{eq:minprobbasicaug} is identical with the set of optimal points of \eqref{eq:minprobbasic} which is characterized by set of points satisfying the KKT conditions \eqref{eq:KKTcondbasic}. The corresponding augmented Lagrangian of \eqref{eq:minprobbasicaug} is given by 
\begin{align*} 
\mathcal L_p&=C(P_g)-U(P_d)-\lambda^T(D_cv+P_g-P_d)\\
&+\frac12\rho||D_cv+P_g-P_d||^2.
\end{align*}
Consequenctly, the distributed dynamics of the primal-dual gradient method applied to \eqref{eq:minprobbasicaug} amounts to 
\begin{equation}
\begin{aligned}
  \tau_g\dot P_g&=-\nabla C(P_g)+\lambda-\rho(D_cv+P_g-P_d)\\
  \tau_d\dot P_d&=\nabla U(P_d)-\lambda+\rho(D_cv+P_g-P_d)\\
  \tau_v\dot v&= D_c^T\lambda-\rho D_c^T(D_cv+P_g-P_d)\\
  \tau_\lambda\dot\lambda&=-D_cv-P_g+P_d,
\end{aligned}\label{eq:strconv}
\end{equation}
which can be written in the same port-Hamiltonian form as  \eqref{eq:clphsys2} where in this case 
\begin{align}\label{eq:Smod}
  S(P_g,P_d,v)=U(P_d)-C(P_g)-\frac12\rho||D_\comm v-P_g+P_d||^2.
\end{align}
This leads to the following result.
\begin{mythm}
  Consider the system \eqref{eq:clphsys2} where $S$ is given by \eqref{eq:Smod} and suppose that $C,U$ are convex and concave functions respectively. Then for every $\bar z\in\eqset_1$ \new{satisfying Assumption \ref{ass:sec1}}, where $\eqset_1$ is defined by \eqref{eq:omega2}, there exists a \alert{open }neighborhood $\new{\Upsilon}$ around $\bar z$ \new{wherein each trajectory} $z$ satisfying \eqref{eq:clphsys2} \new{converges to a point in} 
   $\eqset_1$. 
\end{mythm}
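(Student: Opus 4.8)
The plan is to follow the proof of Theorem \ref{thm:grad} almost verbatim, the only new difficulty being that, with $C,U$ merely convex/concave, the dissipation equality no longer pins down $P_g,P_d$; the modified objective \eqref{eq:Smod} is designed precisely to compensate for this. First note that the quadratic penalty $-\tfrac12\rho\|D_\comm v-P_g+P_d\|^2$ in \eqref{eq:Smod} is concave in $(P_g,P_d,v)$, so $S$ is again concave and \eqref{eq:clphsys2} is still an incremental port-Hamiltonian system with the same interconnection structure $(J-R)$; since the penalty vanishes on the feasible set, the minimizers of \eqref{eq:minprobbasicaug} coincide with those of \eqref{eq:minprobbasic}, so the equilibrium set is again $\eqset_1$ and every $\bar z\in\eqset_1$ satisfies the KKT conditions \eqref{eq:KKTcondbasic}, in particular $D_\comm\bar v-\bar P_g+\bar P_d=0$. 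Observe also that, in contrast to the hybrid controllers of Section \ref{sec:vari-basic-contr}, the right-hand side of \eqref{eq:clphsys2} is now continuous, so the classical LaSalle invariance principle applies.

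Fix $\bar z\in\eqset_1$ satisfying Assumption \ref{ass:sec1} and form the shifted Hamiltonian $\bar H$ as in \eqref{eq:shHam} around $\bar x=\tau\bar z$; by Assumption \ref{ass:sec1}, $\nabla^2\bar H=\nabla^2 H>0$ on a neighbourhood of $\bar x$, so a compact sublevel set $\Upsilon$ of $\bar H$ contained in that neighbourhood is forward invariant and $\bar H$ is positive definite with respect to $\bar x$ on it. Computing $\dot{\bar H}$ as in \eqref{eq:shdissineq}, but decomposing the social-welfare term as $(z-\bar z)^T(\nabla S(z)-\nabla S(\bar z))=-(P_g-\bar P_g)^T(\nabla C(P_g)-\nabla C(\bar P_g))+(P_d-\bar P_d)^T(\nabla U(P_d)-\nabla U(\bar P_d))-\rho\|D_\comm v-P_g+P_d\|^2$ (the last identity uses $D_\comm\bar v-\bar P_g+\bar P_d=0$), gives $\dot{\bar H}\leq0$, with equality forcing $\w=0$, $\nabla_{E_q'}H(x)=0$, $D_\comm v-P_g+P_d=0$, and the two monotonicity equalities for $C$ and $U$. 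The $\rho$-term is exactly what inserts the primal-feasibility residual into this set, recovering the information lost when strict convexity is dropped. By LaSalle, trajectories initialised in (the interior of) $\Upsilon$ converge to the largest invariant set $\mathcal M$ contained in $\Upsilon$ intersected with that equality set.

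The main work is to show $\mathcal M\subseteq\eqset_1$. On $\mathcal M$: $\w\equiv0$ gives $\dot\eta=0$, and from $M\dot\w=0$, $P_g-P_d=D\Gamma(E_q')\Sin\eta$; $\nabla_{E_q'}H(x)=0$ freezes $E_q'$, hence $P_g-P_d$ is constant; the feasibility equality then makes $D_\comm v$ constant, so $D_\comm\dot v=0$; the $\lambda$-dynamics, being proportional to the feasibility residual, vanishes, so $\lambda\equiv\lambda^\ast$; and combining $\tau_v\dot v=-D_\comm^T\lambda^\ast$ (the $\rho$-correction vanishes since the residual is zero) with $D_\comm\dot v=0$ yields $\lambda^{\ast T}D_\comm\tau_v^{-1}D_\comm^T\lambda^\ast=0$, hence $D_\comm^T\lambda^\ast=0$ and $\dot v=0$. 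It remains to kill $\dot P_g$ and $\dot P_d$. On $\mathcal M$ the $P_g$-equation is $\tau_g\dot P_g=-\nabla\big(C(P_g)-\lambda^{\ast T}P_g\big)$, i.e. the gradient flow of a convex function restricted to the compact invariant set $\mathcal M$; since $\tfrac12\dot P_g^T\tau_g\dot P_g$ is non-increasing along $\mathcal M$ (its derivative equals $-\dot P_g^T\nabla^2C(P_g)\dot P_g\leq0$) and $P_g$ is bounded, a standard argument for gradient flows of convex functions on bounded complete trajectories forces $\dot P_g\equiv0$; the symmetric argument with $U$ concave gives $\dot P_d\equiv0$. Hence on $\mathcal M$, $\nabla C(P_g)=\lambda^\ast$, $\nabla U(P_d)=\lambda^\ast$, $D_\comm^T\lambda^\ast=0$ and $D_\comm v-P_g+P_d=0$, i.e.\ \eqref{eq:KKTcondbasic} holds, so $\mathcal M\subseteq\eqset_1$ and $z\to\eqset_1$. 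Alternatively this step can be obtained from the convexity-only extension of Lemmas 4.1--4.4 of \cite{cherukuri2016asymptoticcaratheodory} discussed in the remark following Theorem \ref{thm:ineqcontgrad}, augmenting the Lyapunov function by a quadratic in the multiplier $\lambda$ to recover monotonicity of the primal--dual flow with respect to the optimisers.

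Finally, convergence to a single point is obtained exactly as in the last part of the proof of Theorem \ref{thm:grad}: the trajectory is bounded, so $\Omega(x)\neq\emptyset$; if $\Omega(x)$ contained two distinct points $\bar x_1\neq\bar x_2$, both lie in $\eqset_1$ by the previous step, the shifted Hamiltonians $\bar H_1,\bar H_2$ built from $\bar x_1,\bar x_2$ have positive-definite Hessians throughout $\Upsilon$ (the shift does not alter the Hessian), so one can pick disjoint compact sublevel sets $\bar H_1^{-1}(\leq c_1)$ and $\bar H_2^{-1}(\leq c_2)$, each forward invariant by the dissipation inequality around the corresponding equilibrium (valid since $S$ is concave), and the trajectory is eventually trapped in both, a contradiction. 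Thus $\Omega(x)$ is a singleton and $z$ converges to a point of $\eqset_1$. I expect the only genuinely delicate step to be the reduction of $\mathcal M$ to the KKT set under non-strict convexity/concavity — the gradient-flow-on-a-bounded-invariant-set argument above (or equivalently the cited Cherukuri-type monotonicity lemma) is what makes it go through.
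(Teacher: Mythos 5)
Your proof is correct and follows essentially the same route as the paper's: the shifted Hamiltonian, the decomposition of $(z-\bar z)^T(\nabla S(z)-\nabla S(\bar z))$ into the monotonicity terms minus $\rho\|D_\comm \tilde v-\tilde P_g+\tilde P_d\|^2$, and LaSalle. The one place you go beyond the paper is welcome: the paper simply asserts that the KKT conditions hold on the largest invariant set, whereas your gradient-flow argument (the residual and $\w$ vanish there, $\lambda$ is constant with $D_\comm^T\lambda^*=0$, and the non-increasing speed $\tfrac12\dot P_g^T\tau_g\dot P_g$ on a bounded bi-infinite trajectory forces $\dot P_g=\dot P_d\equiv 0$) correctly supplies the justification that is genuinely needed once strict convexity is dropped.
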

\begin{proof}
Let $\bar z\in\eqset_1$. By the proof of Theorem \ref{thm:grad} it follows that 
  \begin{align*}
    \dot{\bar H}&=-\w^T A\w+(z-\bar z)^T(\nabla S(z)-\nabla S(\bar z))\\
&-(\nabla_{E_q'}\bar H)^TR_q\nabla_{E_q'}\bar H, 
  \end{align*}
where the second term can be written as 
\begin{align}
&\tilde P_d^T(\nabla U(P_d)-\nabla U(\bar P_d))-\tilde P_g^T(\nabla C(P_g)-\nabla C(\bar P_g)) \nonumber\\
&-\rho
  \begin{bmatrix}
    \tilde P_g\\
    \tilde P_d\\
    \tilde v
  \end{bmatrix}^T
\begin{bmatrix}
              -I     & I     & -D_c    \\
              I      & -I    & D_c     \\
              -D_c^T & D_c^T & -D_c^TD_c
            \end{bmatrix}
  \begin{bmatrix}
    \tilde P_g\\
    \tilde P_d\\
    \tilde v
  \end{bmatrix}\leq 0\label{eq:adddamp}
\end{align}
where $\tilde P_g=P_g-\bar P_g, \tilde P_d=P_d-\bar P_d, \tilde v=v-\bar v $. Hence, we obtain that  $\dot {\bar H}\leq 0$ where equality holds  only if $\w=0, \nabla_{E_q'}\bar H(x)=0 $ and  $D_c\tilde v+\tilde P_g-\tilde P_d=D_c v+P_g- P_d=0$. On the largest invariant set $\mathcal S$ where $\dot{ \bar H}_c=0$ we have $\w=0$ and $\eta,E_q'$ are constant and $(P_g,P_d,v,\lambda)$ satisfy the KKT optimality conditions \eqref{eq:KKTcondbasic}.  Therefore $\mathcal S\subset\eqset_1$ and by LaSalle's invariance principle there exists a neighborhood $\new{\Upsilon}$ around $\bar z$ where all trajectories $z$ satisfying \eqref{eq:clphsys2} converge to the set $\mathcal S\subset \eqset_1$. \new{By continuing along the same lines as the proof of Theorem \ref{thm:grad}, convergence of each trajectory to a point is proven.} 
\end{proof}

\begin{myrem}
  Adding the quadratic term in the social welfare problem as done in \eqref{eq:modS} provides an additional advantage. As this introduces more damping in the resulting gradient-method-based controller, see \eqref{eq:adddamp}, it may improve the \blue{convergence properties} of the closed-loop dynamics \blue{\cite{boyd2011distributed}, \cite{rockafellar1973multiplieraugmented}}. Moreover, the amount of damping injected into the system depends on parameter $\rho$, which can be chosen freely. 
\end{myrem}

\section{Conclusions and future research}
\label{sec:concl-future-rese}
In this paper a unifying and systematic energy-based approach in modeling and stability analysis of power networks has been established.  Convergence of the closed-loop system to the set of optimal points using gradient-method-based controllers have been proven using passivity based arguments. This result is extended to the case where nodal power constraints are included into the problem as well. However, for line congestion and power transmission cost the  power network is required to be acyclic to prove asymptotic stability to the set of optimal points. 


The results established in this paper lend themselves to many possible extensions. One possibility is to design an additional (passive) controller that regulates the voltages to the desired values or achieves alternative objectives like (optimal) reactive power sharing. \blue{This could for example be realized by continuing along the lines of \cite{de2015modular}.} 


Recent observations\blue{, see \cite{stegink2015optimal},} suggest that the \pH\ framework also lends itself to consider higher-dimensional models for the synchronous generator than the third-order model used in this paper, while the same controllers as designed in the present paper can be used in this case as well. \alert{Some of these results have been submitted to the upcoming IEEE Conference on Decision and Control 2016 \cite{stegink2015optimal}. }\blue{In addition, current research includes extending the results of the present paper to network-preserving models where a distinction is made between generator and load nodes.} 

One of the remaining open questions is how to deal with line congestion and power transmission costs in cyclic power networks with nonlinear power flows. In addition, all of the results established for the nonlinear power network only provide \emph{local} asymptotic stability to the set of optimal points. Future research includes determining the region of attraction.\alert{ in this kind of systems.}

\bibliographystyle{IEEEtran}
\bibliography{../../00Project/mybib} 

\ifCLASSOPTIONcaptionsoff
  \newpage
\fi

\begin{IEEEbiography}[{\includegraphics[width=1in,height=1.25in,clip,keepaspectratio]{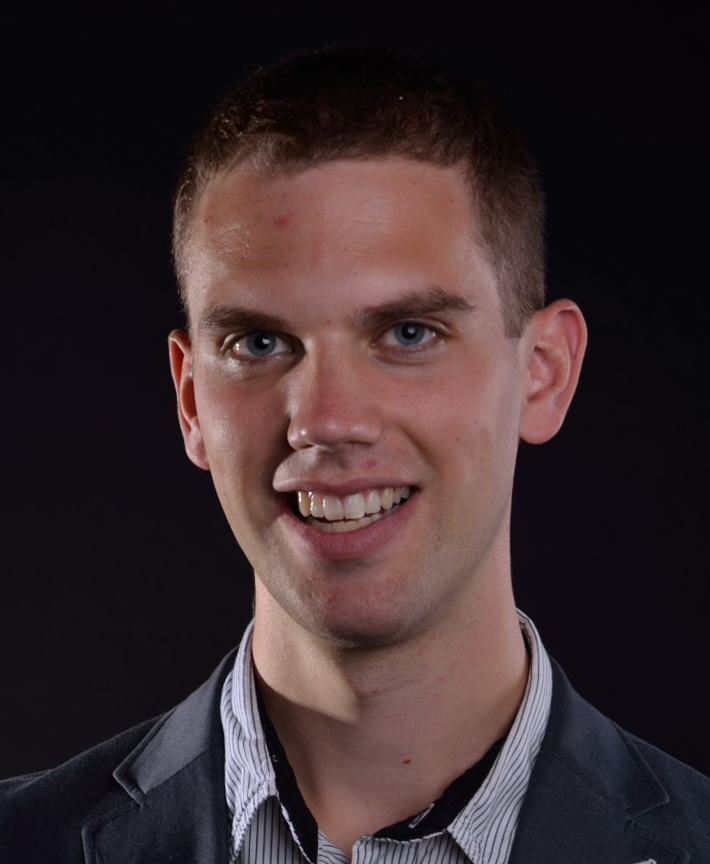}}]{Tjerk Stegink}
  is a Ph.D. candidate at the Engineering and Technology Institute,
  Faculty of Mathematics and Natural Sciences, University of
  Groningen, the Netherlands. He received his B.Sc. (2012) in Applied
  Mathematics and M.Sc. (2014, cum laude) in Systems, Control and
  Optimization from the same university. His main research interests
  are in modeling and nonlinear distributed control of power systems.
\end{IEEEbiography}

\begin{IEEEbiography}[{\includegraphics[width=1in,height=1.25in,clip,keepaspectratio]{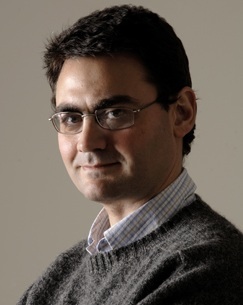}}]{Claudio De Persis}
  received the Laurea degree (cum laude) in electrical engineering in
  1996 and the Ph.D.  degree in system engineering in 2000, both from
  Sapienza University of Rome, Rome, Italy. He is currently a
  Professor at the Engineering and Technology Institute, Faculty of
  Mathematics and Natural Sciences, University of Groningen, the
  Netherlands. He is also affiliated with the Jan Willems Center for
  Systems and Control. Previously he was with the Department of
  Mechanical Automation and Mechatronics, University of Twente and
  with the Department of Computer, Control, and Management
  Engineering, Sapienza University of Rome. He was a Research
  Associate at the Department of Systems Science and Mathematics,
  Washington University, St.  Louis, MO, USA, in 2000–2001, and at the
  Department of Electrical Engineering, Yale University, New Haven,
  CT, USA, in 2001–2002. His main research interest is in control
  theory, and his recent research focuses on dynamical networks,
  cyberphysical systems, smart grids and resilient control. He was an
  Editor of the International Journal of Robust and Nonlinear Control
  (2006–2013), and is currently an Associate Editor of the IEEE
  Transactions On Control Systems Technology (2010–2015), of the
  IEEE Transactions On Automatic Control (2012–2015), and of
  Automatica (2013–present).
\end{IEEEbiography}

\begin{IEEEbiography}[{\includegraphics[width=1in,height=1.25in,clip,keepaspectratio]{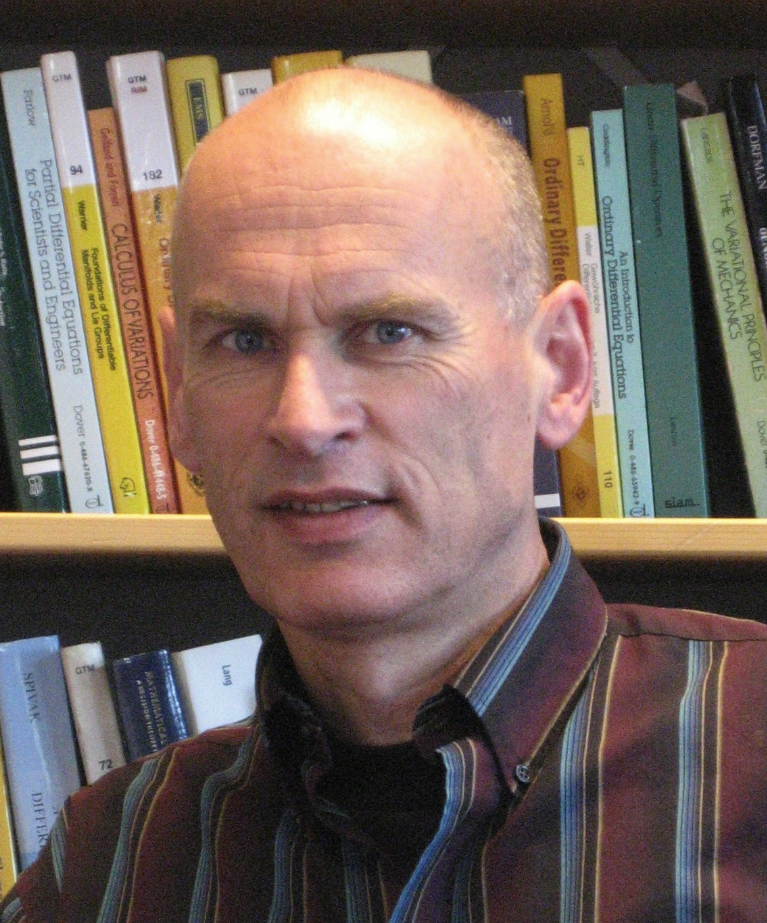}}]{Arjan van der Schaft}
  received the undergraduate (cum laude) and Ph.D. degrees in Mathematics from the University of Groningen, The Netherlands. In 1982 he joined the Department of Applied Mathematics, University of Twente, where he was appointed as full professor in Mathematical Systems and Control Theory in 2000.  In September 2005 he returned to his Alma Mater as a full professor in Mathematics.

  Arjan van der Schaft is Fellow of the Institute of Electrical and Electronics Engineers (IEEE), and Fellow of the International Federation of Automatic Control (IFAC). He was Invited Speaker at the International Congress of Mathematicians, Madrid, 2006. He was the 2013 recipient of the 3-yearly awarded Certificate of Excellent Achievements of the IFAC Technical Committee on Nonlinear Systems.

  He is (co-)author of the following books: System Theoretic Descriptions of Physical Systems (1984), Variational and Hamiltonian Control Systems (1987, with P.E. Crouch), Nonlinear Dynamical Control Systems (1990, with H. Nijmeijer), L2-Gain and Passivity Techniques in Nonlinear Control (1996, 2000), An Introduction to Hybrid Dynamical Systems (2000, with J.M. Schumacher), Port-Hamiltonian Systems Theory: An Introductory Overview (2014, with D. Jeltsema).
\end{IEEEbiography}

\end{document}